\newtheorem{theorem}{Theorem}[section]
\newtheorem{proposition}[theorem]{Proposition}
\newtheorem{lemma}[theorem]{Lemma}
\newtheorem{corollary}[theorem]{Corollary}
\theoremstyle{definition}
\newtheorem{remark}[theorem]{Remark}
\newtheorem{example}[theorem]{Example}
\newtheorem{dfn}[theorem]{Definition}
\renewcommand{\(}{\bigl(}
\renewcommand{\)}{\bigr)}
\newcommand{\xra}[1]{\overset{#1}{\longrightarrow}}
\newcommand{\llg}{\longrightarrow}
\newcommand{\tens}{\otimes}
\newcommand{\tors}{\mathrm{tors}}
\newcommand{\CH}{\operatorname{CH}}
\renewcommand{\Im}{\operatorname{Im}}
\newcommand{\Ker}{\operatorname{Ker}}
\newcommand{\res}{\operatorname{res}}
\newcommand{\cor}{\operatorname{cor}}
\newcommand{\Br}{\operatorname{Br}}
\newcommand{\SB}{\operatorname{SB}}
\newcommand{\C}{\mathbb{C}}
\newcommand{\Z}{\mathbb{Z}}
\newcommand{\Q}{\mathbb{Q}}
\newcommand{\QZ}{\mathop{\mathbb{Q}/\mathbb{Z}}}
\numberwithin{equation}{section}
\begin{document}
\title[Decompososable and Indecomposable Algebras]{decomposable and Indecomposable Algebras of Degree $8$ and Exponent $2$}
\author[D. Barry]{Demba Barry
\\
With an appendix by Alexander S. Merkurjev}

\address{ICTEAM Institute, Universit\'e catholique de Louvain, B-1348 Louvain-la-Neuve, Belgium}
\email{demba.barry@uclouvain.be}
\address{Department of Mathematics, University of California, Los Angeles, CA, USA}
\email{merkurev@math.ucla.edu}

\date{}
\begin{abstract}
We study the decomposition of central simple algebras of exponent $2$ into tensor products of quaternion algebras. We consider in particular decompositions in which one of the quaternion algebras contains a given quadratic extension. Let $B$ be a biquaternion algebra over $F(\sqrt{a})$ with trivial corestriction. A degree $3$ cohomological invariant is defined and we show that it determines whether $B$ has a descent to $F$.  This invariant is used to give examples of indecomposable algebras of degree $8$ and exponent $2$ over a field  of $2$-cohomological dimension $3$ and over a field $\mathbb M(t)$ where the $u$-invariant of $\mathbb M$ is $8$ and $t$ is an indeterminate. The construction of these indecomposable algebras uses  Chow group computations provided by A. S. Merkurjev in Appendix.
\end{abstract}
\maketitle
\section{Introduction}
Let $A$ be a central simple algebra over a field $F$. We say that $A$ is \emph{decomposable} if $A\simeq A_1\otimes_F A_2$ for two central simple $F$-algebras $A_1$ and $A_2$ both non isomorphic to $F$; otherwise $A$ is called \emph{indecomposable}. Let $K=F(\sqrt{a})$ be a quadratic separable extension of $F$. We say that $A$ admits a \emph{decomposition adapted} to $K$ if $K$ is in a quaternion subalgebra of $A$, that is, $A\simeq (a,a')\otimes_F A'$ for some $a'\in F$ and some subalgebra $A'\subset A$. If $A$ is isomorphic to a tensor product of quaternion algebras, we will say that $A$  is  \emph{totally decomposable}. Let $B$ be a central simple algebra over $K$. The algebra $B$ has a \emph{descent} to $F$ if there exists an $F$-algebra $B'$ such that $B\simeq B'\otimes_F K$. It is clear that the algebra $A$ admits a decomposition adapted to $K$ if and only if the centralizer $C_AK$ of $K$ in $A$ has a descent to $F$.

The first example of a non-trivial indecomposable central simple algebra of exponent $2$ was given by Amitsur-Rowen-Tignol. More precisely,  an explicit central division algebra of degree $8$ and exponent $2$ which has no quaternion subalgebra is constructed in \cite{ART79}. Other examples of such algebras were given by Karpenko (see \cite{Kar95} and \cite{Kar98}) by computing torsion in Chow groups. In fact, $8$ is the smallest possible degree for such an algebra by a well-known theorem of Albert which asserts that every algebra of exponent $2$ and degree $4$ is decomposable.

The decomposability question depends on the cohomological dimension of the ground field. Indeed, for obvious reasons there is no indecomposable algebra of exponent $2$ over a field of cohomological dimension $0$ or $1$ (since the Brauer group is trivial in these cases). It follows from a result of Merkurjev (Theorem \ref{thm2.1}) that over a field of cohomological dimension $2$ any central simple algebra of exponent $2$ is isomorphic to a tensor product of quaternion algebras. On the other hand, the known examples of indecomposable algebras of exponent $2$ are constructed over fields of cohomological dimension greater than or equal to $5$.

The main goal of this article is to extend the existence of indecomposable algebras of exponent $2$ to some fields of cohomological dimension smaller than or equal to $4$. We also give an example over a field of rational functions in one variable over a field of $u$-invariant $8$. The problem will be addressed through the study of the decomposability adapted to a quadratic extension of the ground field. More precisely, let $K=F(\sqrt{a})\subset A$ be a quadratic extension field. We first prove  (Section \ref{sec2}) that if $cd_2(F)\le 2$ then $K$  lies in a quaternion subalgebra of $A$. If $\deg A=8$, we define a degree $3$ cohomological invariant, depending only on the centralizer $C_AK$ of $K$ in $A$, which determines whether $A$ admits a decomposition adapted to $K$, that is, whether $C_AK$ has a descent to $F$ (see Section \ref{sec3}). This invariant is used to give examples of indecomposable algebras of exponent $2$ (Theorem \ref{thmB} and Theorem \ref{thmC}). Although our invariant depends only on $C_KA$, it is nothing but a refinement of the invariant $\Delta(A)$ defined by Garibaldi-Parimala-Tignol \cite[\S 11]{GPT09} (see Remark \ref{rem2.1}). Through Remark \ref{rmq2.4} and the proof of Theorem \ref{thmB}, our invariant provides an example of indecomposable algebra $A$ of degree $8$ and exponent $2$ such that $\Delta(A)$ is nonzero, this is an answer to Garibaldi-Parimala-Tignol's question \cite[Question 11.2]{GPT09}.

For the proofs of Theorem \ref{thmB} and Theorem \ref{thmC} we need some results --- injections (\ref{eq2}) and (\ref{eq3}) --- on Chow groups of cycles of codimension $2$. These results follow by Theorem \ref{thmA.2} and Theorem \ref{thmA.1} provided by A. S. Merkurjev in Appendix.
\subsection*{Statement of main results}
Standard examples of fields of cohomological dimension $3$ are $k(t_1,t_2,t_3)$ or \, $k((t_1))((t_2))((t_3))$ where $k$ is an algebraically closed field and $t_1,t_2,t_3$ are independent indeterminates over $k$. Recall also that the $u$-invariant of such fields is $8$ by Tsen-Lang (see for instance \cite[Theorem 1]{Kah90}). The following theorem shows that there is no indecomposable algebra of degree $8$ and exponent $2$ over these standard fields. We recall that the \emph{u-invariant} of a field $F$ is defined as 
\[
u(F)=\max\{\dim \varphi\,|\, \varphi \text{ anisotropic form over } F\}.
\]
If no such maximum exists, $u(F)$ is defined to be $\infty$.
\begin{theorem}\label{thmA}
There exists no indecomposable algebra of degree $8$ and exponent $2$ over a field of $u$-invariant  smaller than or equal to $8$.
\end{theorem}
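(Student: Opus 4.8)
The plan is to reduce the problem to a statement about biquaternion algebras over a quadratic extension and to exploit the fact that fields of small $u$-invariant have controlled cohomological behaviour. Let $A$ be a central simple algebra of degree $8$ and exponent $2$ over $F$ with $u(F)\le 8$. We must show $A$ is decomposable. The first observation is that we may assume $A$ is a division algebra, since otherwise $A$ has a nontrivial matrix factor and is decomposable by definition. Next, since $\exp A = 2$, the class of $A$ in $\Br(F)$ comes from $H^2(F,\mu_2) = H^2(F,\Z/2\Z)$, and I would like to produce a quaternion subalgebra of $A$. The natural route is: show that $A$ contains some quadratic extension $K=F(\sqrt{a})$ such that the centralizer $C_AK$, a biquaternion algebra over $K$ with $\cor_{K/F}[C_AK]$ trivial (it equals $[A\otimes_F K]$ twisted appropriately; more precisely the corestriction of $[C_AK]$ is $2[A]=0$), has a descent to $F$. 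By the discussion in the introduction, $A$ then admits a decomposition adapted to $K$, hence $A\simeq(a,a')\otimes_F A'$ with $A'$ a biquaternion algebra, and then $A'$ is itself decomposable by Albert's theorem, so $A$ is totally decomposable, in particular decomposable.

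The key input is the degree $3$ cohomological invariant of Section \ref{sec3}: for a quadratic extension $K=F(\sqrt a)\subset A$ with $\deg A = 8$, there is an invariant in some quotient of $H^3(F,\Z/2\Z)$ (modulo a subgroup generated by symbols involving $a$ and the relevant quaternion classes) that vanishes if and only if $C_AK$ has a descent to $F$. So the strategy is: \emph{choose $a$ wisely}, then show the invariant vanishes because $H^3$ of $F$ with the relevant coefficients is sufficiently small or the invariant lies in a subgroup that is killed. When $u(F)\le 8$ one expects strong restrictions: for instance, for such fields every quadratic form of dimension $\ge 9$ is isotropic, which via the Milnor conjecture / Arason–Pfister machinery constrains $I^3F$ and hence $H^3(F,\Z/2\Z)$ — in particular every element of $I^3F$ is represented by a form of dimension $\le 8$, i.e. is similar to a $3$-fold Pfister form or a sum of at most... one such form after accounting for dimension. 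More usefully, a class in $H^3(F,\Z/2\Z)$ that becomes trivial over $F(\sqrt a)$ is, by the exact sequence for $H^*$ under a quadratic extension, of the form $(a)\cup\beta$ for some $\beta\in H^2(F,\Z/2\Z)$; combined with the restriction on $C_AK$ (trivial corestriction) one gets that the invariant is a symbol $(a)\cup(b)\cup(c)$, and then I would argue this symbol is absorbed into the subgroup modulo which the invariant is defined, or is itself zero, using the $u$-invariant bound.

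The main obstacle, and where the real work lies, is the choice of the quadratic extension $K\subset A$ and the verification that the invariant vanishes for that choice. It is not automatic that $A$ contains a quadratic extension over which the problem trivializes; one must use the structure of $A$ as a division algebra of degree $8$ together with the arithmetic of $F$. I expect the argument to go: first use $u(F)\le 8$ to find a quaternion algebra $Q=(a,b)$ that "divides" $A$ in a suitable sense — e.g. by a theorem on common quadratic subfields or by the fact that over such fields the index of a biquaternion algebra is controlled — so that $a$ can be chosen with $K=F(\sqrt a)\subset A$ and $C_AK$ having small index, ideally index $\le 2$; if $C_AK$ has index $\le 2$ then it is (Brauer-equivalent to) a quaternion algebra, which certainly descends after possibly adjusting, making the invariant zero. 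Reducing the index of $C_AK$ by a good choice of $K$ is exactly the kind of statement that the low $u$-invariant should buy, perhaps via an isotropy argument applied to an Albert form or the associated quadratic form of $A$. I would structure the write-up as: (1) reduce to $A$ division; (2) recall the invariant and its vanishing criterion from Section \ref{sec3}; (3) show that over a field with $u(F)\le 8$ one can choose $K\subset A$ with $C_AK$ of index at most $2$ (this is the heart, using Pfister-form isotropy); (4) conclude that the invariant vanishes, hence $A$ decomposes, in fact totally.
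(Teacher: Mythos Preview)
Your proposal has a genuine gap at its heart, and the paper's proof takes a far more direct route that bypasses the degree~$3$ invariant entirely.

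The paper's argument is a one-paragraph application of Merkurjev's theorem via Clifford algebras (Lemma~\ref{lm3.1}, essentially due to Kahn). Choose an anisotropic $\varphi\in I^2F$ with $C(\varphi)\sim A$; since $u(F)\le 8$ we have $\dim\varphi\le 8$. Taking a codimension-$1$ subform $\varphi'$, the even Clifford algebra $C_0(\varphi')$ is a tensor product of $\tfrac12\dim\varphi-1\le 3$ quaternion $F$-algebras and is Brauer equivalent to $A$. As $\deg A=8$, a dimension count upgrades this Brauer equivalence to an isomorphism, so $A$ is totally decomposable. No quadratic subfield $K$, no $H^3$-invariant, and no descent argument are needed.

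Your route, by contrast, rests on two unjustified steps. First, you never establish that one can choose $K=F(\sqrt a)\subset A$ with $\ind(C_AK)\le 2$; you say only ``perhaps via an isotropy argument'', and note that the $u$-invariant bound is on $F$, not on $K$, so there is no immediate control over the Albert form of $C_AK$. Second, and more seriously, even if $\ind(C_AK)\le 2$, so $C_AK\simeq M_2(Q)$ for a $K$-quaternion $Q$ with trivial corestriction, descent of $C_AK$ to $F$ is \emph{not} automatic: a quaternion algebra over $K$ with $\cor_{K/F}[Q]=0$ need not lie in the image of $\res_{K/F}$, and the invariant $\delta_{K/F}$ of Section~\ref{sec3} is defined precisely to measure this obstruction. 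Your phrase ``which certainly descends after possibly adjusting'' is exactly the point at issue and is unsupported. The outline as written does not close, and the machinery you invoke is considerably heavier than what the statement requires.
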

On the other hand, examples of indecomposable algebras do exist over a function field of one variable over a suitable field of $u$-invariant $8$:
\begin{theorem}\label{thmB}
Let $A$ be a central simple algebra of degree $8$ and exponent $2$ over a field $F$ and let $K=F(\sqrt{a})$ be a quadratic field extension of $F$ contained in $A$. If $K$ is not in a quaternion subalgebra of $A$ then there exists an extension $\mathbb M$ of $F$ with $u(\mathbb M)=8$  such that the division algebra Brauer equivalent to
\[
A_{\mathbb M}\otimes_{\mathbb M}(a,t)_{\mathbb M(t)}
\]
is an indecomposable algebra of degree $8$ and exponent $2$  over $\mathbb M(t)$, where $t$ is an indeterminate.
\end{theorem}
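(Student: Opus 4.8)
The plan is to build $\mathbb M$ as a suitable "generic" extension of $F$ that kills the obstruction to $K$ lying in a quaternion subalgebra while keeping the $u$-invariant equal to $8$, and then to use the Chow group computations from the Appendix to detect indecomposability after adjoining $t$. First, I would fix the biquaternion algebra $B = C_A K$ over $K$; by hypothesis its corestriction $\cor_{K/F}(B)$ is split (since $[A]$ has exponent $2$ and $B$ is the centralizer of $K$ in $A$), and by the degree $3$ invariant of Section \ref{sec3} the nonvanishing of that invariant is exactly the statement that $B$ has no descent to $F$, equivalently that $K$ is not in a quaternion subalgebra of $A$. The field $\mathbb M$ should be chosen so that: (i) the class of $A$ survives, i.e. $A_{\mathbb M}$ is still division of degree $8$ and exponent $2$; (ii) $u(\mathbb M) = 8$; and (iii) after passing to $\mathbb M(t)$ and twisting by $(a,t)$, the resulting algebra is indecomposable. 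The natural candidate is to iterate the standard function-field construction (à la Merkurjev) that splits all "unwanted" quaternion (or low-dimensional quadratic form) classes while preserving $u = 8$: one takes a tower of function fields of quadrics of dimension $\geq 9$ (so as not to create anisotropic forms of dimension $> 8$, keeping $u \leq 8$; and $u \geq 8$ is inherited from, e.g., a Tsen--Lang subfield or is arranged directly), and at each stage one must check that $A$ remains non-split and that the relevant invariant stays nonzero.

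The heart of the argument is the following reduction. Suppose $D$, the division algebra Brauer-equivalent to $A_{\mathbb M(t)} \otimes (a,t)$, were decomposable, $D \simeq D_1 \otimes D_2$ with $\deg D_1 = 2$, $\deg D_2 = 4$ (the only possibility in degree $8$, exponent $2$, by Albert's theorem applied to $D_2$). I would then analyze this decomposition by specialization/residue at $t = 0$ and at $t = \infty$ on $\mathbb M(t)$: the residue maps on $H^*(\mathbb M(t), \mu_2)$ and on $\Br$ relate the decomposition of $D$ to a decomposition-adapted-to-$K$ statement for $A_{\mathbb M}$, which contradicts (via the invariant of Section \ref{sec3}, together with the fact that the invariant is nonzero over $\mathbb M$) the non-descent of $B$. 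Making this precise is where the Chow-group input enters: the injections (\ref{eq2}) and (\ref{eq3}) on $\CH^2$ coming from Theorem \ref{thmA.2} and Theorem \ref{thmA.1} are used to show that the degree $3$ invariant of $C_A K$ is \emph{not} killed by the generic splitting tower defining $\mathbb M$ — i.e. the torsion class in $\CH^2$ of the relevant Severi--Brauer-type variety persists — so that the invariant remains nonzero over $\mathbb M$ and hence, by the main result of Section \ref{sec3}, $B_{\mathbb M}$ still has no descent and $K$ is still not in a quaternion subalgebra of $A_{\mathbb M}$.

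Concretely the steps are: (1) record that $K \not\subset$ quaternion subalgebra of $A$ means the degree $3$ invariant $\eta(C_A K) \neq 0$ (Section \ref{sec3}); (2) construct the tower $F = \mathbb M_0 \subset \mathbb M_1 \subset \cdots$ of function fields of quadratic forms of dimension $\geq 9$, set $\mathbb M = \bigcup \mathbb M_i$, and verify $u(\mathbb M) \leq 8$ by Merkurjev-style index reduction / the fact that forms of dimension $\leq 8$ are not generically split by such quadrics, while $u(\mathbb M) \geq 8$ comes from a Tsen--Lang or diagonal-form witness that survives the tower; (3) use the Chow-group injections of the Appendix to show $\eta(C_A K)$ survives to $\mathbb M$, hence $A_{\mathbb M}$ is division of degree $8$, exponent $2$, with $K$ not in a quaternion subalgebra; (4) over $\mathbb M(t)$, form $A' = A_{\mathbb M(t)} \otimes (a,t)$ and show by the residue argument above that $A'$ is indecomposable, again invoking the nonvanishing invariant and the codimension-$2$ Chow group computation. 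The main obstacle I anticipate is step (3) combined with step (4): one must simultaneously arrange enough splitting to force $u(\mathbb M) = 8$ and yet \emph{not} split the subtle torsion class detecting non-descent — balancing these is exactly what the Appendix's precise control of $\CH^2$ of products of Severi--Brauer varieties is designed to make possible, and verifying that the tower can be chosen to respect this balance (e.g. by only adjoining function fields of quadrics that the relevant $\CH^2$-class is insensitive to) is the delicate technical core.
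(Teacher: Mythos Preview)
Your proposal is correct and follows essentially the paper's approach: take $\mathbb M = {\rm M}_8^u(F)$ as the standard Merkurjev tower, use injection~(\ref{eq2}) (from Theorem~\ref{thmA.2}) to show that $\delta_{K/F}(C_AK)$ survives to $\mathbb M$, and then deduce indecomposability of $A'_{\mathbb M}$ either directly from \cite[Prop.~2.10]{Tig87} or by showing $\Delta(A'_{\mathbb M})\neq 0$ via the residue computation of Proposition~\ref{prop2.4} together with \cite{GPT09}. Your concern about ``balancing'' the tower is unnecessary---Theorem~\ref{thmA.2} applies uniformly to the function field of \emph{every} form of dimension $\geq 9$, so no selection of quadrics is required; note also that only injection~(\ref{eq2}) is used here, injection~(\ref{eq3}) being reserved for Theorem~\ref{thmC}.
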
 
To produce explicit examples, one may take for $A$ any indecomposable algebra, as those constructed in \cite{ART79} or \cite{Kar98}. Alternately, we give in \cite{Bar} an example of a decomposable algebra satisfying the hypothesis of Theorem \ref{thmB}. 

The following theorem shows that there exists an indecomposable algebra of degree $8$ and exponent $2$ over some field of $2$-cohomological dimension $3$. Let us recall that $\CH^2(\SB(A))_\tors$ is the torsion in the Chow group of cycles of codimension $2$ over the Severi-Brauer variety $\SB(A)$ of $A$  modulo rational equivalence. If $A$ is of prime exponent $p$ and index $p^n$ (except the case $p = 2 = n$) Karpenko showed in \cite[Proposition 5.3]{Kar98}  that if $\CH^2(\SB(A))_\tors\ne 0$ then $A$ is indecomposable. Examples of such  indecomposable algebras are given in \cite[Corollary 5.4]{Kar98}.
\begin{theorem}\label{thmC}
Let $A$ be a central simple algebra of degree $8$ and exponent $2$ such that $\CH^2(\SB(A))_\tors\ne 0$. Then there exists an extension $\mathbb M$ of $F$ with $cd_2(\mathbb{M})=3$ such that $A_{\mathbb M}$ is indecomposable.
\end{theorem}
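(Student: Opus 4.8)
The plan is to enlarge $F$ to a field $\mathbb M$ over which $A$ keeps index $8$ and the group $H^4(\cdot,\Z/2)$ is killed, while the hypothesis $\CH^2(\SB(A))_\tors\ne 0$ is preserved; indecomposability of $A_{\mathbb M}$ then comes from Karpenko's criterion. Precisely, suppose we have produced a field extension $\mathbb M/F$ with $cd_2(\mathbb M)\le 3$, with $\ind A_{\mathbb M}=8$, and with $\CH^2(\SB(A_{\mathbb M}))_\tors\ne 0$. Then $A_{\mathbb M}$ has exponent $2$ and index $2^3$, so the exceptional case $p=2=n$ of \cite[Proposition 5.3]{Kar98}---namely that of index $4$---does not arise, and that proposition shows $A_{\mathbb M}$ is indecomposable. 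Since an indecomposable algebra of degree $8$ and exponent $2$ cannot exist over a field of $2$-cohomological dimension $\le 2$ by Theorem \ref{thm2.1}, this forces $cd_2(\mathbb M)\ge 3$, whence $cd_2(\mathbb M)=3$. So it remains to construct such an $\mathbb M$.

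For the construction I would use Merkurjev's standard recipe for prescribing the $2$-cohomological dimension. Starting from $F_0=F$, pass from the current field $E$ to the compositum of all function fields $E\bigl(R_{L/E}(Q_\pi)\bigr)$, where $L$ runs over the finite separable extensions of $E$, $\pi$ over the $4$-fold Pfister forms over $L$, $Q_\pi$ is the projective quadric of $\pi$, and $R_{L/E}$ denotes Weil restriction; take unions at limit ordinals and repeat the whole procedure until it stabilises, letting $\mathbb M$ be the union. As $H^4(\cdot,\Z/2)$ is generated in each field by $4$-fold symbols, and every such symbol over every finite separable extension is split at some later stage, one obtains $H^4(\mathbb M',\Z/2)=0$ for every finite separable extension $\mathbb M'/\mathbb M$, and hence $cd_2(\mathbb M)\le 3$. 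Moreover every extension adjoined is the function field of (a Weil restriction of) the quadric of an $n$-fold Pfister form with $n=4\ge 3$, and such extensions do not lower the index of an exponent-$2$ algebra --- by the description of the Witt kernel of a Pfister quadric extension, or by the index-reduction formula for function fields of quadrics --- so $\ind A_{\mathbb M}=8$.

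It remains to check $\CH^2(\SB(A_{\mathbb M}))_\tors\ne 0$. The construction realises $\mathbb M$ as a filtered colimit of elementary steps $E\subseteq E\bigl(R_{L/E}(Q_\pi)\bigr)$, each of which is a function field extension of the type treated by Theorem \ref{thmA.2} and Theorem \ref{thmA.1}; by the resulting injections (\ref{eq2}) and (\ref{eq3}), the flat pullback on the torsion of the codimension-$2$ Chow group of $\SB(A)$ is injective at each such step. Since Chow groups of a fixed variety, their torsion subgroups, and injectivity of pullback maps all commute with filtered colimits of the base field, passing to the colimit gives an injection $\CH^2(\SB(A))_\tors\hookrightarrow\CH^2(\SB(A_{\mathbb M}))_\tors$. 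The source is nonzero by hypothesis, so the target is too, completing the proof.

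The main obstacle is to make these two halves compatible: the tower must adjoin only function fields for which the Appendix provides injectivity on codimension-$2$ Chow groups, yet enough of them to kill all of $H^4(\cdot,\Z/2)$ over every finite separable extension and to preserve $\ind A=8$. Concretely this amounts to verifying that the quadrics of $4$-fold Pfister forms --- and their Weil restrictions, which are needed to reach finite separable extensions --- meet the dimension and type hypotheses of Theorem \ref{thmA.1} and Theorem \ref{thmA.2}, together with the customary transfinite cardinality bookkeeping of Merkurjev's construction. The genuinely new input, the Chow-group injectivity itself, is precisely what Merkurjev's Appendix supplies.
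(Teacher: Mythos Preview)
Your overall architecture---Merkurjev-style tower to force $cd_2\le 3$, preservation of $\CH^2(\SB(A))_\tors$ along the tower via the Appendix, then Karpenko's criterion and Theorem~\ref{thm2.1} to pin $cd_2$ at exactly $3$---is exactly the paper's. The divergence, and the gap, is in the tower itself. The paper alternates two kinds of steps: passing to the maximal odd-degree extension, which preserves $\CH^2(\SB(A))_\tors$ by Remark~\ref{rmq2.3}(3) and makes the absolute Galois group of the limit a pro-$2$-group (so that $H^4(\mathbb M,\Z/2)=0$ alone already gives $cd_2(\mathbb M)\le 3$); and adjoining function fields of $4$-fold Pfister quadrics over the current field, to which Theorem~\ref{thmA.1} applies directly, since a $14$-dimensional Pfister quadric has split even Clifford algebra (Remark~\ref{rem1}).

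You instead adjoin function fields of Weil restrictions $R_{L/E}(Q_\pi)$ in order to kill $H^4$ over every finite separable extension at once. But the Appendix does not cover such $X$: conditions (1)--(3) and (5) of Theorem~\ref{thmA.1} are verified there only for genuine quadrics (via \cite{Karpenko90} and Examples~\ref{ex1}, \ref{ex2}), and in particular the injectivity of $\CH^2(X)\to\CH^2(X_K)$ for a Weil restriction of a quadric is not supplied by the cited literature. Your appeals to injections (\ref{eq2}) and (\ref{eq3}) do not help either: (\ref{eq2}) concerns the Weil transfer of a Severi--Brauer variety, and (\ref{eq3}) an unrestricted Pfister quadric. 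You flag this verification as ``the main obstacle'' and leave it open; it is a genuine gap, not bookkeeping. The simple repair is to drop the Weil-restriction step in favour of the paper's odd-degree step; then only plain Pfister quadrics are needed and the Appendix applies verbatim.
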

\section{Notations and preliminaries}
Throughout this paper the characteristic of the base field $F$ is assumed to be different from $2$ and all algebras are associative and finite-dimensional over $F$. The main tools in this paper are central simple algebras, quadratic forms and Galois cohomology. Pierce's book \cite{Pie82} is a reference for the general theory of central simple algebras, references for quadratic form theory over fields are \cite{EKM08}, \cite{Lam05} and \cite{Sch85}. 

Let $F_s$ be a separable closure of $F$. For any integer $n\geq 0$, $H^n(F,\mu_2)$ denotes the Galois cohomology group
\[
H^n(F,\mu_2):= H^n(\text{Gal}(F_s/F),\mu_2)
\]
where $\mu_2=\{\pm 1\}$. The group $H^1(F, \mu_2)$ is identified with $F^\times/F^{\times 2}$ by Kummer theory. For any $a\in F^\times$, we write $(a)$ for the element of  $H^1(F, \mu_2)$ corresponding to $aF^{\times 2}$. The group $H^2(F, \mu_2)$  is identified with the $2$-torsion $\Br_2(F)$ in the Brauer group $\Br(F)$ of $F$, and we write $[A]$ for the element of $H^2(F, \mu_2)$ corresponding to a central simple algebra $A$ of exponent $2$. For more details on Galois cohomology the reader can consult Serre's book \cite{Ser97}.

For any quadratic form $q$, we denote by $C(q)$ and $d(q)$ the Clifford algebra and the discriminant of $q$. If $q$ has dimension $2m+2$, it is easy to see that $C(q)$ is a tensor product of $m+1$ quaternion algebras. Conversely, any tensor product of quaternion algebras is a Clifford algebra. Let $I^nF$ be the $n$-th power of the fundamental ideal $IF$ of the Witt ring $WF$. Abusing notations, we write $q\in I^nF$ if the Witt class of $q$ lies in $I^nF$. We shall use frequently the following property: if $q\in I^2F$ (i.e, $\dim q$ is even and its discriminant is trivial) the Clifford algebra of $q$ has the form $C(q)\simeq M_2(E(q))$ for some central simple algebra $E(q)$ which is totally  decomposable.

Now, let $E/F$ be an extension of  $F$ and let $q$ be a quadratic form defined over $F$. We say that $E/F$ is \emph{excellent}\index{excellent extension} for $q$ if the anisotropic part $(q_E)_{\text{an}}$ of $q_E$ is defined over $F$. If $E/F$ is excellent for every quadratic form defined over $F$, the extension $E/F$ is called \emph{excellent}. 

Generally, the biquadratic extensions are not excellent (see for instance \cite[\S 5]{ELTW83}) but we have the following result:

\begin{lemma}\label{lm4.1}
Assume that $I^3F=0$ and let $L=F(\sqrt{a_1},\sqrt{a_2})$ be a biquadratic extension of $F$. Then, $L/F$ is excellent for the quadratic forms $q\in I^2F$. More precisely, for all $q\in I^2F$ there exists $q_0\in I^2F$ such that $(q_L)_{\text{an}}\simeq (q_0)_L$.
\end{lemma}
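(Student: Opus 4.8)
The plan is to reduce everything to the rank-$2$ situation over the quadratic subextensions and then glue. Write $K_i = F(\sqrt{a_i})$ for $i=1,2$, so that $L = K_1(\sqrt{a_2}) = K_2(\sqrt{a_1})$. Since a quadratic extension $F(\sqrt{a})/F$ is always excellent for every quadratic form (by Arason–Pfister / the classical theory of the transfer, or directly: the anisotropic part of $q_{F(\sqrt a)}$ is $\psi_{F(\sqrt a)}$ where $q \cong \psi \perp \langle a\rangle\cdot\psi'$ after a suitable decomposition), the anisotropic part of $q_{K_1}$ is defined over $F$; call it $q_1$, and note $q_1 \in I^2K_1$ because the class in $I^2$ is detected by dimension parity and discriminant, both of which are preserved under $F \hookrightarrow K_1$ and under taking anisotropic parts in $I^2$ (here I use the displayed property from the preliminaries that $q\in I^2F$ means $\dim q$ even and $d(q)$ trivial). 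The first step, then, is: \emph{$q_1 := (q_{K_1})_{\mathrm{an}}$ is defined over $F$ and lies in $I^2F$.}

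Next I would pass from $K_1$ up to $L = K_1(\sqrt{a_2})$. Again excellence of the quadratic extension $L/K_1$ gives that $(q_L)_{\mathrm{an}} = ((q_1)_L)_{\mathrm{an}}$ is defined over $K_1$; write it as $(q_2)_L$ with $q_2 \in I^2K_1$ a subform of $q_1$ (one has $q_1 \cong q_2 \perp \langle a_2\rangle\cdot\rho$ over $K_1$ for some $\rho$, after which $(q_1)_L$ becomes isotropic exactly along the $\langle a_2\rangle\cdot\rho$ part). So now $(q_L)_{\mathrm{an}} \cong (q_2)_L$ is defined over $K_1$ and lies in $I^2$; the remaining problem is to \emph{descend $q_2$ from $K_1$ to $F$}. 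This is where the hypothesis $I^3F = 0$ must do its work: $q_2$ need not itself be extended from $F$, but its Witt class should be adjustable by an element of the image of $I^2F \to I^2K_1$ without changing $(q_2)_L$, and $I^3F = 0$ is what makes the relevant obstruction (which a priori lives in a quotient involving $I^3$) vanish.

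Concretely, the key step I would carry out is the following descent argument. The anisotropic form $q_2$ over $K_1 = F(\sqrt{a_1})$ satisfies $(q_2)_L$ anisotropic, i.e. $q_2$ stays anisotropic over $K_1(\sqrt{a_2})$, so $q_2$ contains no subform similar to $\langle\langle a_2\rangle\rangle \otimes (\text{something})$ — equivalently $\langle a_2 \rangle q_2 \cong q_2$ in the Witt ring modulo the behavior over $L$. I would analyze the transfer $s_* : WK_1 \to WF$ for the $F$-linear functional $s: K_1 \to F$ dual to $\{1,\sqrt{a_1}\}$, using the exact sequence $WF \xrightarrow{\cdot\langle\langle a_1\rangle\rangle} WF \to WK_1 \xrightarrow{s_*} WF$ of Elman–Lam: the obstruction to $q_2$ being extended from $F$ is measured by $s_*(q_2) \in WF$, and one computes $s_*(q_2) \in I^3F$ because $q_2 \in I^2K_1$ and the transfer shifts filtration appropriately together with the constraint coming from anisotropy over $L$. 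Since $I^3F = 0$, this obstruction vanishes, so $q_2$ is extended from $F$, say $q_2 = (q_0)_{K_1}$; and $q_0 \in I^2F$ since $I^2$-membership descends along $F \hookrightarrow K_1$ (dimension parity and discriminant again, the latter using $F^\times/F^{\times 2} \hookrightarrow$ or rather that a discriminant trivial over $K_1$ and coming from $F$ is, modulo $a_1$, trivial — here one checks the small loose end that if $d(q_0)$ is a square in $K_1$ then either it is a square in $F$ or equals $a_1$, and in the latter case replace $q_0$ by $q_0 \perp \langle\langle a_1\rangle\rangle$ which is still in $I^2F$ and still extends $q_2$ up to Witt class). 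Then $(q_L)_{\mathrm{an}} \cong (q_0)_L$ with $q_0 \in I^2F$, as desired.

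The main obstacle, and the place where $I^3F = 0$ is genuinely used, is the descent of $q_2$ from $K_1$ to $F$ in the last paragraph: unlike the two excellence-of-quadratic-extension steps which are formal, identifying the obstruction to extension as an element of $I^3F$ — and not merely of $WF$ or $I^2F$ — requires care with the transfer exact sequence and with how the anisotropy of $q_2$ over the second quadratic extension $L/K_1$ controls its divisibility. The bookkeeping with discriminants (the parenthetical loose end above) is routine but must be done to land in $I^2F$ rather than merely $I^1F$.
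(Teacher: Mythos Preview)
Your overall architecture---two excellence steps for the quadratic layers, then descent via the transfer---matches the paper's, but the decisive step is not justified. The claim that $s_*(q_2)\in I^3F$ is the problem: Scharlau's transfer satisfies $s_*(I^nK_1)\subset I^nF$, not $I^{n+1}F$, so from $q_2\in I^2K_1$ you obtain only $s_*(q_2)\in I^2F$, and the anisotropy of $(q_2)_L$ does not push this into $I^3F$. Concretely, excellence of $L/K_1$ gives $(q_1)_{K_1}\simeq \langle 1,-a_2\rangle\otimes\langle\gamma_1,\dots,\gamma_r\rangle\perp q_2$ with $\gamma_i\in K_1^\times$; since $(q_1)_{K_1}$ comes from $F$, Frobenius reciprocity yields
\[
s_*(q_2)=-\langle 1,-a_2\rangle\cdot s_*(\langle\gamma_1,\dots,\gamma_r\rangle),
\]
which lies in $I^2F$ but is typically nonzero (for $r=1$ and $\gamma_1\notin F^\times K_1^{\times 2}$ it is similar to an anisotropic $2$-fold Pfister form). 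So $s_*(q_2)$ need not vanish, and your chosen $q_2$ need not descend to $F$.

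The paper deploys the hypothesis differently: from $I^3F=0$ it first deduces $I^3K_1=0$, and uses this over $K_1$ \emph{before} transferring, to normalize the decomposition. Two consequences follow. First, $(q_1)_{K_1}\otimes\langle 1,-\gamma_1\rangle\in I^3K_1=0$ gives $(q_1)_{K_1}\simeq\gamma_1(q_1)_{K_1}$, so one may take $\gamma_1=1$. Second, a short argument with $3$-fold Pfister forms in $I^3K_1=0$ forces $r=1$. Hence $(q_1)_{K_1}\simeq\langle 1,-a_2\rangle\perp q'$ with the binary piece now \emph{defined over $F$}; its transfer is zero on the nose, so $s_*(q')=0$ and $q'$ descends. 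The role of $I^3=0$ is thus to make a good choice of complement over $K_1$, not to kill $s_*(q_2)$ after the fact. (Two minor side remarks: excellence gives $q_1\simeq\langle 1,-a_2\rangle\otimes\rho\perp q_2$, not $q_2\perp\langle a_2\rangle\rho$; and $q_1\in I^2F$ is not automatic---its discriminant could be $a_1$---though the fix is exactly the adjustment you sketch at the end.)
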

\begin{proof}
Let $s: K=F(\sqrt{a_1})\to F$ be the $F$-linear map such that $s(1)=0$ and $s(\sqrt{a_1})=1$. The corresponding Scharlau's transfer  will be denoted by $s_*$. Notice that $I^3K=0$ because of $I^3F=0$ and the exactness of the sequence (see \cite[Theorem 40.3]{EKM08})
\[
\xymatrix { 
   \langle 1,-a_1\rangle I^2F\ar[r] & I^3F\ar[r] & I^3K\ar[r]^{s_*} &I^3F.
  }
\]
Let $q\in I^2F$ be an anisotropic form such that $q_L$ is isotropic. We first show that there exists a quadratic form $q_0$ defined over $F$ such that $(q_0)_L\simeq (q_L)_{\text{an}}$. If $q_L$ is hyperbolic, there is nothing to show. We suppose $q_L$ is not hyperbolic. It is well-known (see e.g. \cite[Theorem 3.1, p.197]{Lam05}) that  $(q_K)_{\text{an}}$ is defined over $F$. If $(q_K)_{\text{an}}$ remains anisotropic over $L$, we take $q_0=(q_K)_{\text{an}}$. Otherwise, one has
\[
(q_K)_{\text{an}}=\langle 1, -a_2\rangle\otimes\langle\gamma_1,\ldots, \gamma_r\rangle\perp q'
\]
for some $\gamma_1,\ldots, \gamma_r\in K$ and some subform $q'$ of $(q_K)_{\text{an}}$ defined over $K$ with $q'_L$ anisotropic (see for instance \cite[Proposition 3.2.1]{Kah08}). On the other hand, the form $(q_K)_{\text{an}}$ being in $I^2K$ and $I^3K=0$, one has $(q_K)_{\text{an}}\otimes \langle 1, -\gamma_1\rangle=0$, that is, $(q_K)_{\text{an}}\simeq \gamma_1 (q_K)_{\text{an}}$. Thus, we may suppose $\gamma_1=1$.

We claim that the dimension of the form $\langle 1,\gamma_2\ldots, \gamma_r\rangle$ is $1$.   Indeed, assume that $r\geq 2$ and write $(q_K)_{\text{an}}=\langle 1, -a_2\rangle\otimes\langle 1,\gamma_2\rangle\perp \varphi$ for some $\varphi$ over $K$. Since $\varphi_L$ is nonzero, it is clear that $\varphi$ is nonzero. Let $\alpha \in K^\times$ be represented by $\varphi$. The form $\langle 1,-a_2\rangle\otimes \langle 1, \gamma_2\rangle \otimes \langle 1,\alpha\rangle$  is hyperbolic since $I^3K=0$, hence $\langle 1,-a_2\rangle\otimes \langle 1, \gamma_2\rangle $ represents $-\alpha$. It follows that $(q_K)_{\text{an}}$ is isotropic, a contradiction. Hence $r=1$ and so $(q_K)_{\text{an}}\simeq \langle 1,-a_2\rangle\perp q'$.

Now, we are going to show that $q'_L\simeq (q_0)_L$ for some $q_0$ defined over $F$. The Scharlau transfers $s_*((q_K)_{\text{an}})$  and $s_*(\langle 1, -a_2\rangle)$ are both hyperbolic because $(q_K)_{\text{an}}$ and $\langle 1, -a_2\rangle$ are defined over $F$. This implies that $s_*(q')$ is hyperbolic, so $q'$ is Witt equivalent to $(q_0)_K$ for some $q_0$ defined over $F$. We deduce from  the excellence of $K/L$ that $q'\simeq (q_0)_K$.  Whence $(q_L)_{\text{an}}\simeq (q_0)_L$.  
   
It remains to prove that $q_0$ may be chosen to be in $I^2F$. Since $(q_0)_L\in I^2L$, the discriminant $d(q_0)\in \{F^{\times 2}, \, a_1.F^{\times 2},\, a_2.F^{\times 2},\, a_1a_2.F^{\times 2}\}$. If $d(q_0)=1$, we are done. Otherwise, assume for example $d(q_0)=a_1$ and write $q_0\simeq\langle c_1,\ldots, c_m\rangle$. The quadratic form $q_1\simeq\langle a_1c_1,c_2,\ldots, c_m\rangle$ is such that $(q_1)_L\simeq (q_0)_L$ and $d(q_1)=1$; that is $q_1\in I^2F$. It suffices to replace $q_0$ by $q_1$. This concludes the proof.
\end{proof}
\section{Adapted decomposition under $cd_2(F)\le2$} \label{sec2}
In this section we assume that the $2$-cohomological dimension $cd_2(F)\le 2$. If the characteristic of $F$ is different from $2$, Merkurjev proved that division algebras of exponent $2$ over $F$ are totally  decomposable. We use this observation to show that any  quadratic field extension of $F$ in a central simple algebra over $F$ of exponent $2$ lies in a quaternion subalgebra. We also prove a related result for a biquadratic extension of $F$. First, let us recall the following result:
\begin{theorem}[Merkurjev]\label{thm2.1} 
Assume that $cd_2(F)\le 2$. Then
\begin{itemize}
\item[(1)] Any central division algebra over $F$ whose class is in $\Br_2(F)$ is totally decomposable.
\item[(2)] A quadratic form $q\in I^2F$ is anisotropic if and only if $E(q)$ is a division algebra.
\end{itemize}
\end{theorem}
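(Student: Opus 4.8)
The statement to prove is Theorem \ref{thm2.1} (Merkurjev): under $cd_2(F) \le 2$, (1) every central division algebra whose class lies in $\Br_2(F)$ is totally decomposable, and (2) a form $q \in I^2F$ is anisotropic iff $E(q)$ is a division algebra.

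The plan is as follows. For part (1), the key input is the norm-residue isomorphism (Merkurjev--Suslin / Merkurjev's theorem on $\Br_2$) identifying $H^2(F,\mu_2)$ with $\Br_2(F)$, so every class in $\Br_2(F)$ is a sum of symbols $(a_i)\cup(b_i)$, i.e. every algebra of exponent $2$ is Brauer-equivalent to a tensor product of quaternion algebras $(a_1,b_1)\otimes\cdots\otimes(a_n,b_n)$. The real content is that when $D$ is a \emph{division} algebra of exponent $2$, one can choose such a decomposition with $n$ equal to $\log_2(\deg D)$, so that the tensor product is actually isomorphic (not merely Brauer-equivalent) to $D$. First I would recall the general mechanism: if a tensor product of quaternion algebras is not division, some partial tensor product has nontrivial zero divisors, and using the hypothesis $cd_2(F)\le 2$ one controls the index via the fact that over such fields the index of a sum of $n$ quaternion classes divides $2^n$ and, more importantly, one can perform a ``common slot'' reduction. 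Concretely, I would invoke the chain-lemma-type arguments together with $cd_2\le 2$ to inductively shorten the decomposition whenever the algebra is not division, reaching a length-minimal decomposition whose quaternion factors multiply to something of the full degree, hence to $D$ itself by dimension count. The hard part here is the length-reduction step: showing that if $(a_1,b_1)\otimes\cdots\otimes(a_n,b_n)$ has index $<2^n$ then over a field with $cd_2\le 2$ it is isomorphic to a product of fewer quaternions — this is exactly where Merkurjev's original argument (via quadratic forms and the structure of $I^2F/I^3F$, or via the triviality of higher cohomology) does the work, and I would cite it rather than reproduce it.

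For part (2), I would argue as follows. Given $q \in I^2F$, write $C(q) \simeq M_2(E(q))$ with $E(q)$ totally decomposable, as recalled in the preliminaries. If $q$ is isotropic, then $q \simeq \mathbb{H} \perp q'$ with $q' \in I^2F$ of smaller dimension, and the behaviour of Clifford algebras under adding a hyperbolic plane gives $E(q) \simeq E(q')$ while simultaneously $\dim q' < \dim q$; iterating, $E(q)$ has index strictly smaller than the ``expected'' value, and in particular if $q$ is isotropic one shows $E(q)$ contains zero divisors — so $E(q)$ division forces $q$ anisotropic. This direction is essentially formal. For the converse — $q$ anisotropic $\Rightarrow$ $E(q)$ division — I would use part (1): $E(q)$ is totally decomposable, so it is isomorphic to a tensor product of $m$ quaternion algebras where $\dim q = 2m+2$; the underlying quadratic form controlling $E(q)$ (an Albert-type form / the form $q$ itself up to similarity and dimension normalization) is anisotropic, and anisotropy of this form is equivalent to $E(q)$ being division. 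The cleanest route is to relate anisotropy of $q$ directly to the index of $E(q)$ via the relation $\dim(q_{\mathrm{an}})$ and $\operatorname{ind} E(q)$, using $e_2\colon I^2F/I^3F \to \Br_2(F)$ and the fact (again needing $cd_2 \le 2$, equivalently $I^3F$ has no anisotropic forms beyond the expected ones over finite extensions) that $q$ cannot drop dimension without $E(q)$ dropping index.

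The main obstacle, as indicated, is the length-reduction / common-slot step in part (1): everything else is bookkeeping with Clifford algebras and the known identification $H^2(F,\mu_2) = \Br_2(F)$. Since this is a recalled theorem of Merkurjev rather than a new result, in the write-up I would state it, give the reduction to symbols, indicate the inductive shortening under $cd_2(F) \le 2$, and refer to Merkurjev's paper for the technical heart, then deduce (2) from (1) as sketched above.
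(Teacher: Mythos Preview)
The paper does not prove this statement at all: its entire proof is the single line ``See \cite[Theorem 3]{Kah90}.'' Theorem~\ref{thm2.1} is recalled as background, not established, so there is no argument in the paper to compare your sketch against.

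Your sketch is broadly reasonable as an outline of how the cited result is actually obtained, but a couple of points are worth flagging. For part~(1), the ``common slot / chain lemma'' framing you lead with is not really the mechanism; the standard route (which you also allude to) is purely via quadratic forms: $cd_2(F)\le 2$ gives $I^3F=0$, so $e_2\colon I^2F/I^3F\to\Br_2(F)$ is injective, and Merkurjev's theorem makes it surjective. Then for a division algebra $D$ of exponent~$2$, one takes the anisotropic form $q\in I^2F$ with $e_2(q)=[D]$; its dimension is forced by the index of $D$, and $C(q)\simeq M_2(D)$ exhibits $D$ as a product of the right number of quaternions. For part~(2), your ``$E(q)$ division $\Rightarrow$ $q$ anisotropic'' direction is fine (splitting off a hyperbolic plane gives $E(q)\simeq M_2(E(q'))$). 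For the converse you are a bit vague: the clean argument is that if $E(q)$ is not division, part~(1) produces a shorter tensor product, hence a form $q'\in I^2F$ of smaller dimension with $e_2(q')=e_2(q)$; injectivity of $e_2$ (i.e.\ $I^3F=0$) then forces $q$ and $q'$ to be Witt-equivalent, so $q$ is isotropic.

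In short: your proposal goes well beyond what the paper does, and with the adjustments above it would constitute an honest proof sketch; but for the purposes of this paper a bare citation, as the author gives, is all that is expected.
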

\begin{proof}
See \cite[Theorem 3]{Kah90}.
\end{proof}

Let $A$ be a $2$-power dimensional central simple algebra over $F$ and let $K\subset A$ be a quadratic extension of $F$. If $A$ is not a division algebra, $K$ is in any split quaternion algebra. We have the following general result in the division case:
\begin{proposition}\label{prop2.1}
Assume that $cd_2(F)\le 2$. Let $A$ be a central division  algebra of exponent $2$ over $F$. Every square-central element of $A$ lies in a quaternion subalgebra.
\end{proposition}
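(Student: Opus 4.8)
The plan is to reduce the statement to a descent problem for a centralizer. Set $a := u^2$, so that $K := F(u) = F(\sqrt a)$ is a quadratic subfield of the division algebra $A$; by the remark recalled in the Introduction it suffices to prove that the centralizer $B := C_A(K)$ has a descent to $F$, for then $K$ --- and hence $u \in K$ --- lies in a quaternion subalgebra of $A$. Since $A$ has exponent $2$, its degree is a power of $2$, say $\deg A = 2^n$; the cases $n \le 1$ are trivial, so I assume $n \ge 2$. Then $B$ is a central division $K$-algebra of degree $2^{n-1}$, and $[B] = [A_K]$ in $\Br(K)$ by the double centralizer theorem (see \cite{Pie82}).

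First I would realise $[A]$ by a quadratic form in $I^2F$. By Theorem \ref{thm2.1}(1) we may write $A \simeq (a_1, b_1) \otimes_F \cdots \otimes_F (a_n, b_n)$, and then $q := \langle\langle a_1, b_1 \rangle\rangle \perp \cdots \perp \langle\langle a_n, b_n \rangle\rangle$ lies in $I^2F$ with $[E(q)] = [A]$. Replacing $q$ by its anisotropic part, which lies in the same Witt class and so leaves $[E(q)]$ unchanged, I may assume $q$ anisotropic; then $E(q)$ is division by Theorem \ref{thm2.1}(2), so $E(q) \simeq A$ and $\dim q = 2n+2$. Extending scalars to $K$, $[E(q_K)] = [A_K] = [B]$, so $E(q_K)$ has index $2^{n-1}$. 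As $(q_K)_{\mathrm{an}}$ is anisotropic and lies in $I^2K$, Theorem \ref{thm2.1}(2) over $K$ (note $cd_2(K) \le cd_2(F) \le 2$) shows $E((q_K)_{\mathrm{an}})$ is division of degree $2^{n-1}$, whence $\dim (q_K)_{\mathrm{an}} = 2n$; in particular $q_K$ is isotropic.

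Next I would invoke excellence. Being a quadratic extension, $K/F$ is excellent, so $(q_K)_{\mathrm{an}} \simeq (q_0)_K$ for some form $q_0$ over $F$. Since $(q_0)_K$ lies in $I^2K$, the discriminant $d(q_0)$ becomes trivial over $K$ and so equals $1$ or $a$ in $F^\times/F^{\times 2}$; in the latter case, multiplying one entry of a diagonalisation of $q_0$ by $a$ does not change $(q_0)_K$ and makes the discriminant trivial, so I may assume $q_0 \in I^2F$. As $(q_0)_K \simeq (q_K)_{\mathrm{an}}$ is anisotropic of dimension $2n$, the form $q_0$ is anisotropic of dimension $2n$, so by Theorem \ref{thm2.1}(2) (over $F$ and over $K$) both $E(q_0)$ and $E(q_0)_K \simeq E((q_0)_K) = E((q_K)_{\mathrm{an}})$ are division of degree $2^{n-1}$. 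Now $[E(q_0)_K] = [E((q_K)_{\mathrm{an}})] = [E(q_K)] = [A_K] = [B]$, and $E(q_0)_K$ and $B$ are both division algebras, so $E(q_0)_K \simeq B = C_A(K)$. Thus $C_A(K)$ has a descent to $F$, which finishes the argument.

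The step I expect to be the main obstacle is the passage from Brauer-equivalence to an actual isomorphism in the last paragraph. Excellence only supplies a form $q_0$ for which $E(q_0) \otimes_F K$ is Brauer-equivalent to a candidate descent of $C_A(K)$; to conclude that it \emph{is} that descent one must pin down the degree, and this is exactly where $cd_2(F) \le 2$ is used a second time, through Theorem \ref{thm2.1}(2) applied over both $F$ and $K$: anisotropy of $q_0$ and of $(q_0)_K$ forces $E(q_0)$ and $E(q_0)_K$ to be division of degree exactly $2^{n-1}$. A subsidiary point requiring care is the dimension bookkeeping --- $\dim q = 2n+2$ and $\dim (q_K)_{\mathrm{an}} = 2n$ --- which is what guarantees $q_K$ is genuinely isotropic, so that the excellence input has content.
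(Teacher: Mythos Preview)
Your proof is correct and takes a genuinely different route from the paper's. Both arguments begin the same way: choose an anisotropic $q\in I^2F$ of dimension $2n+2$ with $C(q)\simeq M_2(A)$ (via Theorem~\ref{thm2.1}) and observe that $q_K$ becomes isotropic. From there the paper works explicitly inside the Clifford algebra: it peels off a subform $\alpha\langle 1,-a\rangle$ from $q$, writes down two anticommuting elements of $C_0(q)$ that generate a quaternion subalgebra $(a,a')$ inside $C_+(q)\simeq A$, and then invokes Skolem--Noether to conjugate the given square-central element into that subalgebra. You instead remain at the level of Witt classes and the functor $E$: you take the \emph{entire} anisotropic kernel $(q_K)_{\mathrm{an}}$, descend it to a form $q_0\in I^2F$ via excellence of the quadratic extension $K/F$, and read off $E(q_0)\otimes_F K\simeq B$ as a descent of the centralizer $C_AK$, which by the equivalence stated in the Introduction puts $K$ (hence $u$) inside a quaternion subalgebra.

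Your approach is cleaner in that it avoids all Clifford-algebra bookkeeping and the final Skolem--Noether step, and it dovetails nicely with the descent viewpoint developed in Section~\ref{sec3}. The paper's approach, by contrast, is self-contained at the element level and does not appeal to the ``adapted decomposition $\Leftrightarrow$ centralizer descends'' equivalence. Your self-identified obstacle --- upgrading Brauer equivalence to isomorphism --- is handled exactly right: once $\dim q_0=2n$ is established, $E(q_0)$ has degree $2^{n-1}$, so $E(q_0)\otimes_F K$ is a degree-$2^{n-1}$ algebra Brauer-equivalent to the division algebra $B$ of the same degree, forcing $E(q_0)\otimes_F K\simeq B$.
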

\begin{proof}
Assume that $\deg A=2^m$. By Theorem \ref{thm2.1}, the algebra $A$ decomposes into a tensor product of $m$ quaternion algebras. We may associate with $M_2(A)$ a $(2m+2)$-dimensional quadratic form $q\in I^2F$ in such a way that $C(q)\simeq M_2(A)$. This form $q$ is anisotropic by Theorem \ref{thm2.1} (2). The form $q$ being in $I^2F$, the center of the Clifford algebra $C_0(q)$ is $F\times F$ and $C_0(q)\simeq C_+(q)\times C_-(q)$ with $A\simeq C_+(q)\simeq C_-(q)$. Let $x\in A-F$ be such that $x^2=a\in F^{\times}$. The element $z=x^{-1}\sqrt{a}$, which is different from $1$ and $-1$, is such that $z^{2}=1$; hence $A_{F(\sqrt{a})}$ is not a division algebra. Then Theorem \ref{thm2.1} and \cite[Theorem 3.1, p.197]{Lam05}  indicate that the form $q$ has a subform $\alpha\langle 1,-a \rangle$ for some $\alpha\in F^{\times}$. Write $q\simeq \alpha\langle 1,-a \rangle\perp q_1$ and let $V_q$ be the underlying space of $q$. Let $e_1,\ldots, e_{2m+2}$ be an orthogonal basis of $V_q$ which corresponds to this diagonalization. One has:
\[
\left\{ \begin{array}{ll}
e_1e_2, e_1e_3 \in C_0(q)\\
(e_1e_2)^2=\alpha^2a,\,\, (e_1e_3)^2=-\alpha^2q(e_3) \\
(e_1e_2)(e_1e_3)=-(e_1e_3)(e_1e_2).
\end{array} \right.
\]
Let $z=e_1\ldots e_{2m+2}$. Since $q\in I^2F$, we have $z^2\in F^{\times 2}$. Let $z^2=\lambda^2$ with $\lambda\in F^{\times}$ and set  $z_+=\frac{1}{2}(1+\lambda^{-1}z)$ and $z_-=\frac{1}{2}(1-\lambda^{-1}z)$. So, $z_+, z_-$ are the primitive central idempotents in $C_0(q)$. The elements $(e_1e_2)z_+$ and $(e_1e_3)z_+$ generate a quaternion subalgebra isomorphic to $(a, a')$ in $C_+(q)\simeq A$ where $a'=(e_1e_3)^2$. Hence, $A$ contains some element $y$ such that $y^2=a$ and $y$ lies in a quaternion subalgebra of $A$. By the Skolem-Noether Theorem $x$ and $y$ are conjugated. Therefore,  $x$ is in a quaternion subalgebra. 
\end{proof}

Recall that if $cd_2(F)\le 2$ then $I^3F=0$ (\cite[Theorem A5']{LLT93}). This fact allows to use Lemma \ref{lm4.1} in the proof of the following results.
\begin{theorem}\label{thm2.2}
Assume that $cd_2(F)\le 2$. Let $A$ be a central division algebra of degree $2^n$ and exponent $2$ over $F$. Let $L=F(\sqrt{a_1},\sqrt{a_2})$ be a biquadratic extension of $F$ contained in $A$. There exist quaternion algebras $Q_1$, $Q_2$ and a subalgebra $A'\subset A$ over $F$ such that $F(\sqrt{a_i})\subset Q_i$ $(i=1,2)$ and $A\simeq Q_1\otimes Q_2\otimes A'$.
\end{theorem}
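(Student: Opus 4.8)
The plan is to reduce the statement to an embedding problem and to solve it with the quadratic form attached to $A$, much as in the proof of Proposition \ref{prop2.1}, the new ingredient being Lemma \ref{lm4.1}. Write $\deg A=2^n$; since $L\subset A$ is biquadratic, $n\ge 2$ and $a_1,a_2$ are independent modulo squares, and recall that $cd_2(F)\le 2$ forces $I^3F=0$. By Theorem \ref{thm2.1}(1) the algebra $A$ is totally decomposable, so exactly as in Proposition \ref{prop2.1} we may associate with $M_2(A)$ a form $q\in I^2F$ with $\dim q=2n+2$ and $C(q)\simeq M_2(A)$; this $q$ is anisotropic by Theorem \ref{thm2.1}(2), because $E(q)\simeq A$ is a division algebra.

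The first step is to bound the anisotropic part of $q_L$. As $[L:F]=4$, the centralizer $C_AL$ is central simple over $L$ of degree $2^{n-2}$, whence $\ind(C_AL)\le 2^{n-2}$ and $A_L\simeq M_4(C_AL)$; therefore $C(q_L)\simeq M_2(A_L)\simeq M_8(C_AL)$, so that $E(q_L)$ is Brauer-equivalent to $C_AL$. Since $cd_2(L)\le 2$ and $(q_L)_{\text{an}}\in I^2L$ is anisotropic, Theorem \ref{thm2.1}(2) shows that $E\bigl((q_L)_{\text{an}}\bigr)$ is a division algebra; being Brauer-equivalent to $C_AL$ it is the underlying division algebra of $C_AL$, of degree at most $2^{n-2}$, and comparing dimensions gives $\dim (q_L)_{\text{an}}\le 2n-2$. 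Now Lemma \ref{lm4.1} applies to $q$ and $L$ — this is precisely the situation it was designed for — and produces $q_0\in I^2F$ with $(q_0)_L\simeq(q_L)_{\text{an}}$. In particular $q_0$ is anisotropic over $F$ with $\dim q_0\le 2n-2$, so by Theorem \ref{thm2.1}(2) the totally decomposable algebra $E(q_0)$ is division of degree at most $2^{n-2}$.

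Next I would build the two quaternion algebras. The Clifford invariant $\phi\mapsto[E(\phi)]$ is an additive map $I^2F\to\Br_2(F)$ carrying $q$ to $[A]$ and $q_0$ to $[E(q_0)]$, and it commutes with scalar extension. Since $(q-q_0)_L=q_L-(q_L)_{\text{an}}=0$ in $W(L)$, the class $[A]+[E(q_0)]$ lies in $\ker\bigl(\Br_2(F)\to\Br_2(L)\bigr)$. Because $L/F$ is biquadratic, by \cite{ELTW83} this kernel equals $\Br_2\bigl(F(\sqrt{a_1})/F\bigr)+\Br_2\bigl(F(\sqrt{a_2})/F\bigr)+\Br_2\bigl(F(\sqrt{a_1a_2})/F\bigr)$; since $\Br_2\bigl(F(\sqrt d)/F\bigr)$ consists of the quaternion classes $(d,\ast)$ and $(a_1a_2,x)=(a_1,x)+(a_2,x)$, every element of this kernel has the form $(a_1,b_1)+(a_2,b_2)$ with $b_1,b_2\in F^\times$. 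Fix such $b_1,b_2$. Then $A\otimes(a_1,b_1)\otimes(a_2,b_2)$ is Brauer-equivalent to $E(q_0)$, hence of index at most $2^{n-2}=\deg A/4$; since $(a_1,b_1)\otimes(a_2,b_2)$ has exponent $2$ (so is isomorphic to its opposite) and degree $4$ dividing $\deg A$, the standard criterion for the existence of an embedding of central simple algebras provides an $F$-algebra embedding $(a_1,b_1)\otimes(a_2,b_2)\hookrightarrow A$.

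Finally, inside $(a_1,b_1)\otimes(a_2,b_2)$ the elements $\sqrt{a_1}\otimes 1$ and $1\otimes\sqrt{a_2}$ commute, have squares $a_1$ and $a_2$, and (as $a_1,a_2$ are independent modulo squares) span a subfield isomorphic to $L$, so the embedding above restricts to a copy of $L$ in $A$. By the Skolem--Noether theorem I may conjugate this copy onto the given $L\subset A$, carrying $\sqrt{a_1}\otimes1$ and $1\otimes\sqrt{a_2}$ to the prescribed $\sqrt{a_1}$ and $\sqrt{a_2}$ and the two quaternion factors to commuting quaternion $F$-subalgebras $Q_1,Q_2\subset A$ with $\sqrt{a_i}\in Q_i$. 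The double centralizer theorem then yields $A\simeq Q_1\otimes Q_2\otimes A'$ with $A'=C_A(Q_1\otimes Q_2)$, and $F(\sqrt{a_i})\subset Q_i$, as required. The main obstacle is the bound $\dim(q_L)_{\text{an}}\le 2n-2$ together with the descent of $(q_L)_{\text{an}}$ to $F$ via Lemma \ref{lm4.1}: this is what pushes the index of $A\otimes(a_1,b_1)\otimes(a_2,b_2)$ down to $2^{n-2}$, and once that is secured the remaining steps are routine.
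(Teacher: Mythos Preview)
Your proof is correct and follows the same strategy as the paper's: attach the form $q\in I^2F$ with $C(q)\simeq M_2(A)$, descend $(q_L)_{\text{an}}$ to a form $q_0\in I^2F$ via Lemma~\ref{lm4.1}, and then split off two quaternion factors adapted to $a_1,a_2$ from the difference. The only variation is that the paper carries out this last step in the Witt ring---writing $q-q'$ as $\langle1,-a_1\rangle\otimes\varphi_1\perp\langle1,-a_2\rangle\otimes\varphi_2$ via \cite[Theorem~4.3, p.~444]{Lam05} and then taking the Clifford invariant---and concludes the isomorphism by a degree count (using the exact equality $\dim(q_L)_{\text{an}}=2n-2$ from \cite{Siv05}), whereas you work directly in $\ker\bigl(\Br_2(F)\to\Br_2(L)\bigr)$ and finish with the embedding criterion plus Skolem--Noether.
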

\begin{proof}
Let $q\in I^2F$ be an anisotropic quadratic form such that $C(q)\simeq M_2(A)$ (Theorem \ref{thm2.1}). Notice that the index of $A_L$ is $2^{n-2}$. Since $cd_2(L)=2$, it follows by \cite[Lemma 8]{Siv05} that 
\[
\dim(q_L)_{\text{an}}=2\log_2\text{ind}(A_L)+2= 2n-2.
\]
By Lemma \ref{lm4.1}, there is $q'\in I^2F$ such that $(q_L)_{\text{an}}\simeq q'_L$. Put $\psi=q-q'$. The form $\psi_L$ being hyperbolic it follows by \cite[Theorem 4.3, p.444]{Lam05}  that there exist quadratic forms $\varphi_1, \varphi_2$ such that  
\[
\psi= \langle 1, -a_1\rangle\otimes\varphi_1 \perp \langle 1, -a_2\rangle\otimes\varphi_2 \,\, \text{ in }\, WF.
\]
Since $\psi\in I^2F$, we must have $\varphi_1, \varphi_2\in IF$. Taking the Witt-Clifford invariant of each side of the identity above, we obtain 
\[
A\otimes A' \sim (a_1,d(\varphi_1)) \otimes (a_2,d(\varphi_2)) 
\]
where $A'$ is such that $M_2(A')=C(q')$. This yields that
\[
A\simeq (a_1,d(\varphi_1)) \otimes (a_2,d(\varphi_2))\otimes A'.
\]
\end{proof}
If $\deg A=8$, the above theorem holds without the division condition on $A$ as shows the following:
\begin{proposition}\label{prop2.4}
Assume that $cd_2(F)\le 2$. Let $A$ be a central simple algebra of degree $8$ and exponent $2$ over $F$. Let $L=F(\sqrt{a_1},\sqrt{a_2})$ be a biquadratic extension of $F$ contained in $A$. There exist quaternion algebras $Q_1$, $Q_2$  and $Q$ over $F$ such that $F(\sqrt{a_i})\subset Q_i$ $(i=1,2)$ and $A\simeq Q_1\otimes Q_2\otimes Q$.
\end{proposition}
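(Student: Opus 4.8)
The plan is to reduce to Theorem \ref{thm2.2} by first disposing of the cases where $A$ is not a division algebra, and then invoking the division case directly. So the first step is a case analysis on the index of $A$. Since $\exp A = 2$, we have $\ind A \in \{1, 2, 4, 8\}$, and the only genuinely new situation (compared to Theorem \ref{thm2.2}) is when $A$ is not division, i.e. $\ind A \in \{1,2,4\}$.

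If $\ind A = 1$, then $A \simeq M_8(F)$ and every étale subalgebra embeds; in particular each $F(\sqrt{a_i})$ sits inside a split quaternion algebra $(a_i, 1)_F \simeq M_2(F)$, and $A \simeq (a_1,1)_F \otimes (a_2,1)_F \otimes M_2(F)$ does the job. If $\ind A = 2$, write $A \simeq M_4(Q_0)$ for a quaternion division algebra $Q_0$; since $L \subset A$ is biquadratic and $Q_0$ contains at most a quadratic field, I would argue that after conjugation one can split $L$ suitably: concretely, $M_2(F) \otimes Q_0$ already contains a copy of $L$ unless $Q_0$ itself is tied to one of the $a_i$, and in all cases one can write $A \simeq Q_1 \otimes Q_2 \otimes Q$ with $F(\sqrt{a_i}) \subset Q_i$ by choosing at least one $Q_i$ split. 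The case $\ind A = 4$ is the delicate non-division case: here $A \simeq M_2(B)$ for a biquaternion division algebra $B$, and I want to show $L$ (or at least its two quadratic subfields) can be placed in a product of three quaternion factors one of which is split. The cleanest route is to mimic the proof of Theorem \ref{thm2.2} directly: associate to $M_2(A)$ an anisotropic form $q \in I^2F$ with $C(q) \simeq M_2(A)$ (Theorem \ref{thm2.1}), note $\deg A = 8$ forces $\dim q = 10$, compute $\dim(q_L)_{\mathrm{an}}$ via \cite[Lemma 8]{Siv05} using $\ind(A_L)$, apply Lemma \ref{lm4.1} to get $q' \in I^2F$ with $(q_L)_{\mathrm{an}} \simeq q'_L$, and then use \cite[Theorem 4.3, p.444]{Lam05} to write $q - q' = \langle 1,-a_1\rangle \otimes \varphi_1 \perp \langle 1,-a_2\rangle \otimes \varphi_2$ in $WF$ with $\varphi_1,\varphi_2 \in IF$. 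Taking Witt--Clifford invariants yields $A \simeq (a_1, d(\varphi_1))_F \otimes (a_2, d(\varphi_2))_F \otimes A'$ with $M_2(A') = C(q')$, and since $A'$ is Brauer-equivalent to a quaternion algebra (its index divides $\ind A / \text{something} \le 2$, or it is split, by a degree count on $q'$), $A' \simeq Q$ is a single quaternion algebra. In fact this argument works uniformly for all $\ind A \le 8$, which is the slick way to present it: the only input that changes is that $A'$, having Brauer class of index at most $2$, is an honest quaternion algebra $Q$ rather than merely totally decomposable.

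The main obstacle is the last step in the non-division case: controlling the algebra $A'$ well enough to conclude it is a single quaternion algebra (equivalently, that $q'$ has dimension at most $6$), and more subtly ensuring the quaternion factors $(a_i, d(\varphi_i))_F$ actually split off as tensor factors of $A$ rather than merely matching Brauer classes — this is where one uses that $\deg A = 8$ and a dimension count on the three factors forces the tensor decomposition to be genuine (their product has degree $8 = \deg A$, so $A$ is their tensor product). If $\varphi_i$ turns out to be hyperbolic (i.e. $\psi$ involves only one of the two Pfister factors), the corresponding $(a_i, d(\varphi_i))_F$ degenerates; then one simply replaces it by the split quaternion algebra $(a_i,1)_F$, which still contains $F(\sqrt{a_i})$, and absorbs the discrepancy into $Q$. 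Handling this degenerate possibility cleanly, together with the $\ind A \le 2$ cases where the Severi--Brauer dimension count is tight, is the only place care is needed; everything else is a direct transcription of the proof of Theorem \ref{thm2.2}.
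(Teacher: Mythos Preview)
Your overall strategy matches the paper's: split on $\ind A$, dispose of $\ind A\le 2$ by placing $L$ inside a split $M_4(F)$-factor, invoke Theorem~\ref{thm2.2} for $\ind A=8$, and run a Clifford-algebra argument for $\ind A=4$. The gap is in that last case.

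You write: ``associate to $M_2(A)$ an anisotropic form $q\in I^2F$ with $C(q)\simeq M_2(A)$ (Theorem~\ref{thm2.1}).'' But Theorem~\ref{thm2.1}(2) says exactly that $q\in I^2F$ is anisotropic if and only if $E(q)$ is a \emph{division} algebra. When $\ind A=4$ the algebra $A$ is not division, so no anisotropic $q$ with $C(q)\simeq M_2(A)$ exists; the ``uniform'' version you propose therefore cannot get started. (Your dimension count is also off: $C(q)\simeq M_2(A)$ with $\deg A=8$ would give $\dim q=8$, not $10$.) The paper's remedy is to take $q$ anisotropic with $C(q)$ only \emph{Brauer-equivalent} to $A$, so $q$ is the $6$-dimensional Albert form of the biquaternion division algebra underlying $A$. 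Then $(q_L)_{\text{an}}$ is either $0$ or a $2$-fold Pfister form $n_Q$ (according as $\ind A_L=1$ or $2$); Lemma~\ref{lm4.1} lets one choose $n_Q$---hence $Q$---defined over $F$. Setting $\psi=q$ or $\psi=q-n_Q$ accordingly, the rest of your outline (write $\psi$ in the biquadratic Witt kernel via \cite[Theorem~4.3, p.~444]{Lam05}, take Witt--Clifford invariants, use degree $8$ to upgrade Brauer equivalence to isomorphism) goes through, with the third factor equal to $M_2(F)$ or $Q$.

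A minor point: your worry about a degenerate $\varphi_i$ is unnecessary. If $\varphi_i=0$ in $WF$ then $d(\varphi_i)=1$ and $(a_i,1)_F$ is split, but it still contains $F(\sqrt{a_i})$, so nothing needs to be ``absorbed into $Q$''.
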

\begin{proof}
We are going to argue on the index ind$(A)$ of $A$: if ind$(A)\le 2$, the statement is clear since $L$ lies in any split algebra of degree $4$. If ind$(A)=8$, the result follows from Theorem \ref{thm2.2}. 

Suppose ind$(A)=4$. Let $q\in I^2F$ be an anisotropic quadratic form such that $C(q)\sim M_2(A)$. Such a form exists by Merkurjev's Theorem. Since $A_L$ is split or of index $2$, the form $q_L$ is either hyperbolic or equivalent to a multiple of the anisotropic norm form, $n_Q$, of some quaternion algebra $Q$ defined over $L$. Notice that every multiple of $n_Q$ is isometric to $n_Q$. We may consider $n_Q$ as being defined over $F$ because of Lemma \ref{lm4.1}; so $Q$ is defined over $F$. Moreover, $(q-n_Q)_L$ is hyperbolic. Put $\psi=q$ if ind$(A_L)=1$ and $\psi=q-n_Q$ if ind$(A_L)=2$; the form $\psi_L$ is hyperbolic. As in the proof of Theorem \ref{thm2.2} there exist quadratic forms $\varphi_1, \varphi_2\in IF$ such that  
\[
A\otimes Q \sim (a_1,d(\varphi_1)) \otimes (a_2,d(\varphi_2)) \,\, \text{ if }\,\, \text{ind}(A_L)=2,
\]
and 
\[
A\sim (a_1,d(\varphi_1)) \otimes (a_2,d(\varphi_2)) \,\, \text{ if }\,\, \text{ind}(A_L)=1.
\] 
This yields that
\[
A\simeq (a_1,d(\varphi_1)) \otimes (a_2,d(\varphi_2))\otimes M_2(F) \quad \text{or}\quad A\simeq (a_1,d(\varphi_1)) \otimes (a_2,d(\varphi_2))\otimes Q.
\]
\end{proof}
\begin{remark}\label{rem2.0}
For central simple algebras of degree $8$ and exponent $2$ over $F$ with $cd_2(F)=2$,  Proposition \ref{prop2.4} cannot be generalized to the triquadratic extensions of $F$. For example, let $F=\C(x,y)$ where $\C$ is the field of complex numbers and $x,y$ are independent indeterminates over $\C$. Notice that $cd_2(F)=2$. Since all $5$-dimensional quadratic forms over $F$ are isotropic by Tsen-Lang (see for instance \cite[p.376]{Lam05}), Theorem \ref{thm2.1} yields that index equals $2$ for every division algebra of exponent $2$ over $F$. Let $a_1=x^2(1+y)^2-4xy$, $a_2=1-x$, $a_3=y$ and $M=F(\sqrt{a_1},\sqrt{a_2},\sqrt{a_3})$. It is shown in \cite[(5.8)]{ELTW83} that there exists an algebra $A$ of exponent $2$ and degree $8$ over $F$ containing $M$ such that $A$ admits no decomposition of the form $(a_1,a_1')\otimes(a_2,a_2')\otimes(a_3,a_3')$ with $a_1',a_2', a_3'\in F$.
\end{remark}
\section{Degree 3 invariant}\label{sec3}
In this section, we assume no condition on the $2$-cohomological dimension of $F$. Let $K=F(\sqrt{a})$ be a quadratic field extension of $F$. Throughout this section $B$ is a biquaternion algebra over $K$ such that $\cor_{K/F}(B)$ is trivial. In fact, this condition on $\cor_{K/F}(B)$ means that $B$ has a descent to $F$ up to Brauer equivalence because of the exactness of the sequence (see for instance \cite[(4.6)]{Ara75})
\[
\xymatrix @C=3pc{ 
    \text{Br}_2(F) \ar[r]^{r_{K/F}} & \text{Br}_2(K) \ar[r]^{\text{cor}_{K/F}} & \text{Br}_2(F).
     }
\]
We are going to attach to $B$ a cohomological invariant which determines whether $B$ has a descent to $F$. This descent problem is another way of studying the question of adapted decomposition. Indeed, let $A$ be a central simple algebra over $F$ of degree greater than or equal to $8$ whose restriction is $B$. As explained in the introduction, $B$ has a descent to $F$ if and only if $A$ admits a decomposition adapted to $K$. 

Let $\varphi$ be an Albert form of $B$, that is, $\varphi$ is a $6$-dimensional form in $I^2K$ such that $C_0(\varphi)\simeq C_+(\varphi)\times C_-(\varphi)$ with $C_+(\varphi)\simeq C_-(\varphi)\simeq B$. Recall that an Albert form is unique up to a scalar. We have the following lemma  where $e_3$ denotes the Arason invariant $I^3F\to H^3(F,\mu_2)$: 

\begin{lemma}\label{lm4.2}
We keep the above notations. One has the following statements:

\begin{enumerate}
\item[(1)] Scharlau's transfer $s_*(\varphi)$ lies in $I^3F$.

\item[(2)] For any $\lambda\in K^{\times}$, we have $e_3(s_*(\varphi))=e_3(s_*(\langle\lambda\rangle\otimes\varphi))+ \cor_{K/F}((\lambda)\cdot[B])$.
\item[(3)] For $\lambda \in K^{\times}$, the following are equivalent:
\begin{itemize}
\item[(i)] $s_*(\langle\lambda\rangle\otimes\varphi)=0$.
\item[(ii)] $e_3(s_*(\varphi))=\cor_{K/F}((\lambda)\cdot[B])$.
\end{itemize}
\end{enumerate}
\end{lemma}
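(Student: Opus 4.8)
The plan is to prove (1) first, then derive the transfer formula in (2) by a direct cohomological computation, and finally obtain (3) as a formal consequence of (2).

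For (1), I would argue as follows. Since $\varphi\in I^2K$, its Scharlau transfer satisfies $s_*(\varphi)\in I^2F$; indeed the transfer $s_*$ maps $I^nK$ into $I^nF$ for all $n$ (this is standard, e.g.\ via the projection formula and the behaviour of $s_*$ on Pfister forms). To push this one degree higher, the key is the hypothesis $\cor_{K/F}(B)=0$. Under the identification of $H^2$ with $\Br_2$, the Witt--Clifford invariant $e_2(s_*(\varphi))$ equals $\cor_{K/F}(e_2(\varphi))=\cor_{K/F}([B])=0$. Hence $s_*(\varphi)$ is a form in $I^2F$ with trivial Clifford invariant, i.e.\ $s_*(\varphi)\in I^3F$ by the Merkurjev theorem (the injectivity of $e_2$ on $I^2F/I^3F$). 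So $e_3(s_*(\varphi))$ is defined.

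For (2), fix $\lambda\in K^\times$. The form $\langle\lambda\rangle\otimes\varphi$ is again a $6$-dimensional form in $I^2K$ (scaling does not change membership in $I^2$, and in fact $\langle\lambda\rangle\otimes\varphi$ is an Albert form of $B$ as well), so by part (1) applied to it, $s_*(\langle\lambda\rangle\otimes\varphi)\in I^3F$ too. Consider the difference $\varphi - \langle\lambda\rangle\otimes\varphi = \langle 1,-\lambda\rangle\otimes(-\varphi)$ in $WK$; this lies in $I^3K$ since $\langle 1,-\lambda\rangle\in IK$ and $\varphi\in I^2K$. Applying $s_*$ and using additivity of the transfer,
\[
s_*(\varphi) - s_*(\langle\lambda\rangle\otimes\varphi) = s_*\bigl(\langle 1,-\lambda\rangle\otimes(-\varphi)\bigr)\in I^3F .
\]
Now take $e_3$ of both sides. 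The left side is $e_3(s_*(\varphi))-e_3(s_*(\langle\lambda\rangle\otimes\varphi))$. For the right side I would invoke the compatibility of the Arason invariant with transfers, namely $e_3(s_*(\rho))=\cor_{K/F}(e_3(\rho))$ for $\rho\in I^3K$, together with the projection-type formula $e_3(\langle 1,-\lambda\rangle\otimes\psi)=(\lambda)\cdot e_2(\psi)$ for $\psi\in I^2K$. With $\psi=-\varphi$ (a sign which is immaterial modulo $2$) this gives $e_3(\langle 1,-\lambda\rangle\otimes(-\varphi))=(\lambda)\cdot e_2(\varphi)=(\lambda)\cdot[B]$, hence the right side equals $\cor_{K/F}((\lambda)\cdot[B])$. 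Rearranging yields exactly the identity in (2).

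For (3), since $\langle\lambda\rangle\otimes\varphi$ is a $6$-dimensional form in $I^2K$ lying in $I^3K$ only when it is hyperbolic (a $6$-dimensional form in $I^3$ must be hyperbolic, as $\dim<8$), and more to the point $s_*(\langle\lambda\rangle\otimes\varphi)\in I^3F$ is detected by $e_3$: the Arason invariant $e_3$ is injective on $I^3F$ modulo $I^4F$, and here one actually has that $s_*(\langle\lambda\rangle\otimes\varphi)=0$ iff $e_3$ of it vanishes — this needs the dimension bound, $\dim s_*(\langle\lambda\rangle\otimes\varphi)\le 12$, so its anisotropic part is small enough that triviality of $e_3$ forces it into $I^4F$ and then hyperbolicity (Arason--Pfister). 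Granting this, (i) $\iff e_3(s_*(\langle\lambda\rangle\otimes\varphi))=0$, and by (2) this reads $e_3(s_*(\varphi))=\cor_{K/F}((\lambda)\cdot[B])$, which is (ii).

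\textbf{Main obstacle.} The delicate point is the last equivalence in (3): upgrading ``$e_3=0$'' to ``the form is Witt-trivial'' for $s_*(\langle\lambda\rangle\otimes\varphi)$. One must control the dimension of the anisotropic part of this transfer (it is a $6$-dimensional $K$-form transferred down, so of dimension at most $12$ over $F$, and it lies in $I^3F$) and then apply the Arason--Pfister Hauptsatz: a form in $I^4F$ of dimension $<16$ is hyperbolic, so once $e_3$ kills it modulo $I^4F$ it is already hyperbolic. The other ingredients — that $s_*$ preserves the filtration by powers of $I$, the projection formula $e_3(\langle 1,-\lambda\rangle\cdot\psi)=(\lambda)\cdot e_2(\psi)$, and $e_3\circ s_*=\cor_{K/F}\circ e_3$ on $I^3K$ — are standard and I would cite them from \cite{EKM08} rather than reprove them.
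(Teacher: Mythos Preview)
Your proof is correct and follows essentially the same route as the paper: part (1) via $e_2\circ s_*=\cor_{K/F}\circ e_2$ and $\ker e_2=I^3$, part (2) via the Witt identity $\varphi=\langle\lambda\rangle\varphi+\langle 1,-\lambda\rangle\varphi$ together with $e_3\circ s_*=\cor_{K/F}\circ e_3$ and $e_3(\langle 1,-\lambda\rangle\varphi)=(\lambda)\cdot e_2(\varphi)$, and part (3) via the dimension bound plus the Arason--Pfister Hauptsatz. The only blemish is the sign in your identity $\varphi-\langle\lambda\rangle\varphi=\langle 1,-\lambda\rangle\otimes(-\varphi)$ (it should be $\langle 1,-\lambda\rangle\otimes\varphi$), but as you note this is irrelevant modulo~$2$.
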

\begin{proof}
(1)  Since the diagram
\[
\xymatrix @R=3pc @C=3pc{ 
    I^2K \ar[r]^{s_*} \ar[d]_{e_2}  & I^2F \ar[d]^{e_2} \\
     \text{Br}_2(K) \ar[r]^{\cor_{K/F}} & \text{Br}_2(F)
     }
\]
commutes (see \cite[(4.18)]{Ara75}) and $\cor_{K/F}(B)=0$, we have $e_2(s_*(\varphi))=0$. Hence, 
\[
s_*(\varphi)\in\Ker(e_2\,:\, I^2F\rightarrow \text{Br}_2(F))=I^3F.
\]

\noindent
(2) From the identity
\[
s_*(\varphi)=s_*(\langle\lambda\rangle\otimes\varphi+\langle 1,-\lambda\rangle\otimes\varphi)=s_*(\langle \lambda \rangle\otimes\varphi)+ s_*(\langle 1,-\lambda\rangle\otimes\varphi)
\] 
we have
\begin{equation}\label{eq}
e_3(s_*(\varphi))= e_3(s_*(\langle\lambda\rangle\otimes\varphi))+e_3(s_*(\langle 1,-\lambda\rangle\otimes\varphi)).
\end{equation}
On the other hand, since the diagram
\[
\xymatrix @R=3pc @C=3pc{ 
    I^3K \ar[r]^{s_*} \ar[d]_{e_3}  & I^3F \ar[d]^{e_3} \\
     H^3(K,\mu_2) \ar[r]^{\cor_{K/F}} & H^3(F,\mu_2)
     }
\]
commutes (\cite[(5.7)]{Ara75}), one has
\begin{eqnarray*}
e_3(s_*(\langle 1,-\lambda\rangle\otimes\varphi)) & = & \cor_{K/F}(e_3(\langle 1,-\lambda\rangle\otimes\varphi)) \\    
 & = & \cor_{K/F}((\lambda)\cdot e_2(\varphi)) \\                      
 & = & \cor_{K/F}((\lambda)\cdot[C_+(\varphi)])\\                       
 & = & \cor_{K/F}((\lambda)\cdot [B]). 
\end{eqnarray*}
Therefore the relation (\ref{eq}) becomes
\[
e_3(s_*(\varphi))= e_3(s_*(\langle\lambda\rangle\otimes\varphi))+ \cor_{K/F}((\lambda)\cdot [B])
\]
as desired.

\noindent
(3) This point follows immediately from (2) because $s_*(\langle\lambda\rangle\otimes\varphi)=0$ if and only if $e_3(s_*(\langle\lambda\rangle\otimes\varphi)=0$ by the Arason-Pfister Hauptsatz (see for instance \cite[Chap. 4]{EKM08})
\end{proof}

It follows from Lemma \ref{lm4.2} (2) that $e_3(s_*(\varphi))$ modulo $\cor_{K/F}((K^\times)\cdot [B])$ does not depend on the choice of the Albert form $\varphi$. Therefore we may define an invariant: 
\begin{dfn}
Let $B$ be a biquaternion algebra over $K$ such that $\cor_{K/F}(B)=0$. The invariant
\[
\delta_{K/F}(B)\in \frac{H^3(F,\mu_2)}{\cor_{K/F}((K^\times)\cdot [B])}
\]
is the class of $e_3(s_*(\varphi))$.
\end{dfn}

\begin{remark}\label{rem2.1}
If $A$ is a central simple $F$-algebra of degree $8$ containing $K$, and if $B=C_AK$, then $\delta_{K/F}(B)$ is the image in $H^3(F,\mu_2)/\cor_{K/F}((K^\times)\cdot[B])$ of the discriminant $\Delta(A,\sigma)\in H^3(F,\mu_2)$ of any symplectic involution $\sigma$ on $A$ which leaves $K$ elementwise fixed. This follows by comparing the definition of $\delta_{K/F}(B)$ with Garibaldi-Parimala-Tignol \cite[Proposition 8.1]{GPT09}. Therefore, the image of $\delta_{K/F}(B)$ in $H^3(F,\mu_2)/(F^\times)\cdot[A]$ is the invariant $\Delta(A)$ defined in \cite[\S 11]{GPT09} (Note that $[B]=\res_{K/F}[A]$, so by the projection formula $\cor_{K/F}((K^\times)\cdot[B])=(N_{K/F}(K^\times))\cdot[A]\subset (F^\times)\cdot[A]$).
\end{remark}
The main property of the invariant is the following result:
\begin{proposition}\label{prop2.2}
The algebra $B$ has a descent to $F$ if and only if $\delta_{K/F}(B)=0$.
\end{proposition}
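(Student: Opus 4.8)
The plan is to prove both implications of the equivalence $B$ has a descent to $F$ $\iff$ $\delta_{K/F}(B)=0$, with the forward direction being straightforward and the converse being the substantial part.

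\medskip
\textbf{The easy direction.} Suppose $B\simeq B'\otimes_F K$ for some central simple $F$-algebra $B'$ of exponent $2$ and degree $4$. Then I would take an Albert form $\varphi_0\in I^2F$ of $B'$ (a $6$-dimensional form over $F$ with $C_\pm(\varphi_0)\simeq B'$), and observe that $(\varphi_0)_K$ is an Albert form of $B$. Since $\varphi$ is unique up to a scalar, I may compute $\delta_{K/F}(B)$ using $\varphi=(\varphi_0)_K$. Now $s_*((\varphi_0)_K)=s_*(\langle 1,\sqrt a\,\rangle\cdot\varphi_0)$ by the projection formula for the transfer, so its Witt class is $s_*(\langle 1,\sqrt a\,\rangle)\cdot\varphi_0$; but $s_*\langle 1,\sqrt a\,\rangle=\langle 1\rangle\perp\langle -a\rangle\cdots$ — more cleanly, $s_*$ applied to a form extended from $F$ is the zero map on the $\langle 1\rangle$-component and identity on the $\langle\sqrt a\rangle$-component, so $s_*((\varphi_0)_K)=s_*(\langle\sqrt a\,\rangle\cdot(\varphi_0)_K)$; choosing $\lambda=\sqrt a$ in Lemma \ref{lm4.2}(3), condition (i) $s_*(\langle\lambda\rangle\cdot\varphi)=s_*((\varphi_0)_K)$ is not quite what I want. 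A cleaner route: $s_*((\varphi_0)_K)$ is Witt equivalent to $\langle 1,-a\rangle\cdot\varphi_0$ (a standard computation of Scharlau transfer on a restricted form), which lies in $I^3F$ and has $e_3(\langle 1,-a\rangle\cdot\varphi_0)=(a)\cdot e_2(\varphi_0)=(a)\cdot[B']=\cor_{K/F}((\sqrt a\,)\cdot[B])$ by the projection formula. Hence $e_3(s_*(\varphi))\in\cor_{K/F}((K^\times)\cdot[B])$, i.e. $\delta_{K/F}(B)=0$.

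\medskip
\textbf{The hard direction.} Assume $\delta_{K/F}(B)=0$; I must produce a descent. By definition this means $e_3(s_*(\varphi))=\cor_{K/F}((\lambda)\cdot[B])$ for some $\lambda\in K^\times$, so by Lemma \ref{lm4.2}(3) we may, after scaling the Albert form by $\lambda$, assume $s_*(\varphi)=0$ in $WF$. Now I would invoke the exactness of the Scharlau transfer sequence for the quadratic extension $K/F$: the kernel of $s_*\colon WK\to WF$ consists exactly of forms of the shape $\langle 1,-a\rangle\cdot\rho$ for $\rho$ a form over $K$ — this is the classical result (Frobenius reciprocity / the exact sequence $WF\xra{r_{K/F}}WK\xra{s_*}WF$, whose image of $r_{K/F}$ is the annihilator-type description). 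More precisely $\Ker(s_*)$ is the $W K$-submodule generated by $\langle 1,-a\rangle$, hence $\varphi\equiv\langle 1,-a\rangle\cdot\rho\pmod{\text{hyperbolic}}$ for some $\rho$ over $K$. The difficulty is that $\varphi$ is $6$-dimensional and I need the anisotropic part of $\langle 1,-a\rangle\cdot\rho$ to be an Albert form defined over $F$ up to scalar; this is where I expect to use Lemma \ref{lm4.1} (excellence of biquadratic extensions for $I^2$-forms, valid once one reduces to a field where $I^3=0$, or used directly over the appropriate base) or a dimension-count argument in the style of Proposition \ref{prop2.4}. Concretely: from $\varphi=\langle 1,-a\rangle\cdot\rho$ in $WK$ with $\dim\varphi=6$ and $\varphi$ anisotropic, I would show $\rho$ can be taken $3$-dimensional, write $\rho\simeq\langle 1\rangle\perp\langle -a\rangle\cdot\tau\perp\cdots$ — rather, use that $\langle 1,-a\rangle\cdot\rho$ with $\rho$ a form over $K$ is similar to $\langle 1,-a\rangle\cdot r_{K/F}(\eta)$-type expressions only after descent; the key is that $d(\varphi)$ and the Witt-Clifford invariant of $\varphi$ must be controlled. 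Taking the even Clifford algebra: $C_+(\langle 1,-a\rangle\cdot\rho)$ is computed from $\rho$, and I want to recognize it as $(B')_K$ for an explicit $F$-algebra $B'$. The cleanest packaging: show that $\varphi$ being (similar to) a form in the image of $r_{K/F}$ on $I^2F$ modulo $I^3$-controlled terms forces $B=C_+(\varphi)$ to be $\res_{K/F}$ of a biquaternion $F$-algebra, using that $\cor_{K/F}[B]=0$ already places $[B]$ in $\Im(r_{K/F})$ on Brauer groups and the vanishing of $\delta$ upgrades this from a Brauer-equivalence statement to an isomorphism statement via the index formula for biquaternion algebras.

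\medskip
\textbf{Main obstacle.} The genuine difficulty is the converse: translating the cohomological vanishing $\delta_{K/F}(B)=0$ — which a priori only controls $B$ up to the ambiguity subgroup $\cor_{K/F}((K^\times)\cdot[B])$ and only sees a degree-$3$ cohomological shadow — into an actual $F$-form of the degree-$4$ algebra $B$. I expect this to hinge on (a) the exact sequence describing $\Ker(s_*\colon WK\to WF)$ as generated by $\langle 1,-a\rangle$, (b) an argument that the resulting $6$-dimensional form is similar to one defined over $F$ (excellence, Lemma \ref{lm4.1}, or a direct manipulation), and (c) the fact that an Albert form determines the biquaternion algebra up to isomorphism, so that a descent of the Albert form yields a descent of $B$. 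Everything else — the forward implication and the reductions — should be routine given Lemma \ref{lm4.2} and the standard transfer formalism recalled in the excerpt.
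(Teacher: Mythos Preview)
Your overall strategy matches the paper's, but you have the key exact sequence backwards, and this muddles both directions.

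\textbf{The kernel of $s_*$.} You write that $\Ker(s_*\colon WK\to WF)$ consists of forms $\langle 1,-a\rangle\cdot\rho$ with $\rho$ over $K$. But $\langle 1,-a\rangle$ is \emph{hyperbolic over $K$} (since $a\in K^{\times 2}$), so that description says $\Ker(s_*)=0$, which is false. The correct statement, coming from the exact sequence $WF\xra{r_{K/F}}WK\xra{s_*}WF$ you yourself cite, is $\Ker(s_*)=\Im(r_{K/F})$: a form over $K$ has trivial transfer if and only if it is Witt-equivalent to a form extended from $F$. (It is $\Ker(r_{K/F})\subset WF$ that equals $\langle 1,-a\rangle\cdot WF$.) Once you use the right description, the hard direction is immediate and much shorter than you anticipate: from $s_*(\langle\lambda\rangle\varphi)=0$ you get $\langle\lambda\rangle\varphi$ Witt-equivalent to $(\varphi_0)_K$ for some $\varphi_0$ over $F$; excellence of the quadratic extension $K/F$ upgrades this to an isometry with $\dim\varphi_0=6$; and the discriminant trick from the end of Lemma~\ref{lm4.1} (replace one slot of $\varphi_0$ by its product with $a$) lets you assume $\varphi_0\in I^2F$. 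Then $B\simeq C_+(\varphi)\simeq C_+((\varphi_0)_K)\simeq C_+(\varphi_0)\otimes_F K$. No appeal to the hypothesis $I^3F=0$ is needed, and Lemma~\ref{lm4.1} itself is not invoked---only the elementary manoeuvres from its proof.

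\textbf{The easy direction.} Your computation $s_*((\varphi_0)_K)\sim\langle 1,-a\rangle\cdot\varphi_0$ is wrong for the paper's choice of $s$ (with $s(1)=0$, $s(\sqrt a)=1$): one has $s_*(\langle 1\rangle_K)=0$ in $WF$, hence $s_*((\varphi_0)_K)=0$ directly, so $e_3(s_*(\varphi))=0$ already lies in the denominator without the detour through $(a)\cdot[B']$. Equivalently, in the paper's formulation: if $\varphi\simeq\langle\lambda\rangle(\varphi_0)_K$ then $\langle\lambda\rangle\varphi\simeq(\varphi_0)_K$ and $s_*(\langle\lambda\rangle\varphi)=0$, so condition (i) of Lemma~\ref{lm4.2}(3) holds.
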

\begin{proof}
It follows from the definition that $\delta_{K/F}(B)=0$ if and only if there exists $\lambda\in K^\times$ such that the equivalent conditions (i) and (ii) of Lemma \ref{lm4.2} hold. We are going to prove that  $B$ has a descent to $F$ if and only if (i) holds. We know that $B$ has a descent to $F$ if and only if there is $\varphi_0\in I^2F$ with $\dim\varphi_0=6$ such that
\[
B\simeq C_+(\varphi)\simeq C_+(\varphi_0)\otimes K\simeq C_+((\varphi_0)_K).
\]
In other words, $B$ has a descent to $F$ if and only if there exists $\lambda\in K^{\times}$ such that $\varphi\simeq \langle\lambda\rangle\otimes (\varphi_0)_K$ because of the uniqueness of the Albert form up to similarity. Therefore to get the statement, it suffices to show for given $\lambda\in K^\times$, there exists $\varphi_0\in I^2F$ with $\dim\varphi_0=6$ and $\varphi\simeq \langle\lambda\rangle\otimes (\varphi_0)_K$ if and only if  $s_*(\langle\lambda\rangle\otimes\varphi)=0$. 

Suppose $B$ has a descent to $F$, that is,  $\varphi\simeq \langle\lambda\rangle\otimes (\varphi_0)_K$. We have automatically $s_*(\langle\lambda\rangle\otimes\varphi)=0$. Conversely, assume that  $s_*(\langle\lambda\rangle\otimes\varphi)=0$.  Then, as in the proof of Lemma \ref{lm4.1} there exists a quadratic form $\varphi_0\in I^2F$ with $\langle\lambda\rangle\otimes\varphi\simeq (\varphi_0)_K$. That concludes the proof. 
\end{proof}
Now, let us denote by $X$ the Weil transfer of $\SB(B)$ over $F$. Such a transfer exists (see for instance \cite[(2.8)]{BS65} or \cite[Chapter 4]{Sch94}) and is projective (\cite[Corollary 2.4]{Kar00}) since $\SB(B)$ is a projective variety.  Moreover, we have 
\[
X_K\simeq \SB(B)\times \SB(B)
\]
(see \cite[(2.8)]{BS65}). We denote by $F(X)$ the function field of $X$. Notice that $F(X)$ splits $B$; that also means $\varphi$ is hyperbolic over $F(X)$.

For any integer $d\geq 1$, let $\mathbb Q/\mathbb Z(d-1)=\varinjlim \mu_n^{\otimes (d-1)}$, where $\mu_n$ is the group of $n$-th  roots of unity in $F_s$. We let (see \cite[Appendix A]{GMS03})
$H^d(F,\mathbb Q/\mathbb Z(d-1))$ be the Galois cohomology group with coefficients in $\mathbb Q/\mathbb Z(d-1)$. By definition, one has the canonical map $H^3(F,\mu_{2n}^{\otimes 2})\,\rightarrow\, H^3(F,\QZ(2))$. On the other hand, using the surjectivity of the map $H^3(F,\mu_{2n}^{\otimes 2}) \rightarrow H^3(F,\mu_n^{\otimes 2})$ (see \cite{MS82}), the infinite long exact sequence in cohomology induced by the natural exact sequence of Galois modules
\[
1\rightarrow \mu_2 \rightarrow\mu_{2n}^{\otimes 2}\rightarrow \mu_{n}^{\otimes 2}\rightarrow 1
\]
shows that the canonical map $ H^3(F,\mu_2)\,\rightarrow\, H^3(F,\QZ(2))$ is injective.

Since $\varphi$ is hyperbolic over $F(X)$, it is clear that 
\[
e_3(s_*(\varphi))\in \Ker\Big(H^3(F,\QZ(2))\rightarrow H^3(F(X),\QZ(2))\Big).
\]
On the other hand, since $F(X)$ splits $B$, we have
\[
\cor_{K/F}((K^\times)\cdot [B])\subset \Ker\Big(H^3(F,\QZ(2))\rightarrow H^3(F(X),\QZ(2))\Big).
\]
We have just proved the following consequence:
\begin{corollary}
The invariant $\delta_{K/F}(B)$ is in the quotient group
\begin{equation}\label{eq1}
\frac{\Ker\Big(H^3(F,\QZ(2))\rightarrow H^3(F(X),\QZ(2))\Big)}{\cor_{K/F}((K^\times)\cdot [B])}.
\end{equation}
\end{corollary}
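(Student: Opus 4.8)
The plan is to read the statement off from the two displayed containments that immediately precede it, after first recording why the quotient groups in question are well defined. I would begin by recalling that, by Lemma~\ref{lm4.2}(1), $s_*(\varphi)\in I^3F$, so $e_3(s_*(\varphi))$ is a well-defined element of $H^3(F,\mu_2)$, and that by Lemma~\ref{lm4.2}(2) its class modulo $\cor_{K/F}((K^\times)\cdot[B])$ is independent of the choice of Albert form; this class is $\delta_{K/F}(B)$. Since for $\lambda\in K^\times$ one has $(\lambda)\cdot[B]\in H^3(K,\mu_2)$, the subgroup $\cor_{K/F}((K^\times)\cdot[B])$ lies in $H^3(F,\mu_2)$, and the injectivity of the canonical map $H^3(F,\mu_2)\to H^3(F,\QZ(2))$ proved above shows that $H^3(F,\mu_2)/\cor_{K/F}((K^\times)\cdot[B])$ injects into $H^3(F,\QZ(2))/\cor_{K/F}((K^\times)\cdot[B])$. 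Via this injection I regard $\delta_{K/F}(B)$ as the class of $e_3(s_*(\varphi))$ in the latter quotient.

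Next I would check that this representative, as well as the subgroup by which we divide, becomes trivial after restriction to $F(X)$. For the representative this is the content of the first display before the statement: $F(X)$ splits $B$, so the Albert form $\varphi$ is hyperbolic over $F(X)$ (that is, over $F(X)\otimes_F K$), and hence --- Scharlau's transfer being compatible with extension of the base field --- $(s_*(\varphi))_{F(X)}$ is hyperbolic, so $e_3(s_*(\varphi))$ restricts to $0$ in $H^3(F(X),\mu_2)$ and a fortiori in $H^3(F(X),\QZ(2))$. For the subgroup this is the second display: since $B$ is split over $F(X)\otimes_F K$, compatibility of corestriction with scalar extension shows that for each $\lambda\in K^\times$ the class $\cor_{K/F}((\lambda)\cdot[B])$ restricts to a corestriction of $(\lambda)\cdot 0=0$, hence to $0$ in $H^3(F(X),\QZ(2))$. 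Thus both $e_3(s_*(\varphi))$ and the subgroup $\cor_{K/F}((K^\times)\cdot[B])$ lie in $\Ker\Big(H^3(F,\QZ(2))\to H^3(F(X),\QZ(2))\Big)$.

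Putting the two paragraphs together, $\delta_{K/F}(B)$ is represented --- inside $H^3(F,\QZ(2))/\cor_{K/F}((K^\times)\cdot[B])$ --- by an element of that kernel subgroup and is taken modulo a subgroup of the same kernel; therefore it lies in the quotient (\ref{eq1}), which is exactly the assertion. The only thing that needs care is the functoriality bookkeeping underlying the two displays: that Scharlau's transfer and corestriction each commute with extension of the base field $F\to F(X)$ (here $F(X)\otimes_F K$ is a field because $X_K\simeq\SB(B)\times\SB(B)$ is irreducible), that $e_3$ commutes with restriction of fields, and that the embeddings $H^3(-,\mu_2)\hookrightarrow H^3(-,\QZ(2))$ are functorial; none of these is substantive, so the corollary is essentially a formal repackaging of the preceding computations.
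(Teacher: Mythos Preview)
Your proposal is correct and follows essentially the same approach as the paper: the paper simply records the two displayed containments (that $e_3(s_*(\varphi))$ and $\cor_{K/F}((K^\times)\cdot[B])$ both lie in the kernel of restriction to $F(X)$) and declares the corollary an immediate consequence. You have spelled out the functoriality bookkeeping (compatibility of Scharlau's transfer and of corestriction with the base change $F\to F(X)$, and the passage from $\mu_2$- to $\QZ(2)$-coefficients) more carefully than the paper does, but the argument is the same.
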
 
\begin{remark}\label{rmq2.2}
\begin{sloppypar}
The quotient (\ref{eq1}) is canonically identified with the torsion $\CH^2(X)_\tors$  (see \cite{Pey95b} or \cite{Pey95}). Therefore, we may view $\delta_{K/F}(B)$ as belonging to this group.
\end{sloppypar}
\end{remark}
We now study the behavior of $\delta_{K/F}(B)$ under an odd degree extension. Let $\mathbb{F}$ be an odd degree extension of $F$. We denote by $K\mathbb F$ the quadratic extension $\mathbb{F}(\sqrt{a})$. The following result shows that if $B$ has no descent to $F$, then the same holds for $B_{\mathbb F}$.
\begin{proposition}\label{prop2.3}
The scalar extension map
\begin{multline*}
\frac{\Ker\Big(H^3(F,\QZ(2))\rightarrow H^3(F(X),\QZ(2))\Big)}{\cor_{K/F}((K^\times)\cdot [B])}\longrightarrow \\
\frac{\Ker\Big(H^3(\mathbb{F},\QZ(2))\rightarrow H^3(\mathbb{F}(X_{\mathbb{F}}),\QZ(2))\Big)}{\cor_{K\mathbb{F}/\mathbb{F}}(((K\mathbb{F})^{\times})\cdot[B_{K\mathbb F}])}
\end{multline*}
is an injection.
\end{proposition}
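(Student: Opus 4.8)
The plan is to exploit the standard transfer (corestriction) argument for odd-degree extensions. Write $n=[\mathbb F:F]$, which is odd. For each of the two function fields $F(X)$ and $\mathbb F(X_{\mathbb F})$ we have restriction maps $\res$ and corestriction maps $\cor$ on $H^3(-,\QZ(2))$, and the composite $\cor\circ\res$ is multiplication by $n$. Since $n$ is odd and the relevant groups are $2$-primary torsion (the kernel in question is a subgroup of $\CH^2$-torsion, hence $2$-torsion-primary after identifying with the quotient in Remark~\ref{rmq2.2}, and $\cor_{K/F}((K^\times)\cdot[B])$ is killed by $2$), multiplication by $n$ is an isomorphism on these groups. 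The key point to check is that $\res$ and $\cor$ are compatible with the two sub/quotient structures, i.e. that $\res$ sends the numerator kernel for $F(X)$ into the one for $\mathbb F(X_{\mathbb F})$ (clear, by functoriality of the restriction map applied to the commutative square of fields), that $\res$ sends $\cor_{K/F}((K^\times)\cdot[B])$ into $\cor_{K\mathbb F/\mathbb F}(((K\mathbb F)^\times)\cdot[B_{K\mathbb F}])$ (this is the usual compatibility of corestriction with base change, together with $\res_{\mathbb F/F}[B]=[B_{K\mathbb F}]$ under the identification $B\otimes_K K\mathbb F = B_{K\mathbb F}$), and symmetrically that $\cor$ sends the $\mathbb F$-side subgroups into the $F$-side subgroups. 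Granting these compatibilities, $\res$ and $\cor$ descend to maps between the two quotient groups of the Proposition whose composite (the $F$-side round trip) is multiplication by $n$, hence an isomorphism; in particular the descended $\res$ is injective, which is exactly the assertion.

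First I would set up the commutative diagram of field extensions
\[
\xymatrix@C=3pc{
F \ar[r] \ar[d] & F(X) \ar[d] \\
\mathbb F \ar[r] & \mathbb F(X_{\mathbb F})
}
\]
noting that $\mathbb F(X_{\mathbb F})=\mathbb F\otimes_F F(X)$ is a field since $X$ is geometrically integral (being a Weil transfer of a Severi--Brauer variety) and $\mathbb F/F$ is separable; and $[\mathbb F(X_{\mathbb F}):F(X)]=n$. Then I would invoke, for the top and bottom rows, the functoriality of the Bloch--Ogus/étale-motivic restriction map $H^3(-,\QZ(2))\to H^3((-)(X),\QZ(2))$ under the vertical extension, which gives that $\res_{\mathbb F/F}$ carries $\Ker\bigl(H^3(F,\QZ(2))\to H^3(F(X),\QZ(2))\bigr)$ into $\Ker\bigl(H^3(\mathbb F,\QZ(2))\to H^3(\mathbb F(X_{\mathbb F}),\QZ(2))\bigr)$. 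For the denominators I would use the projection formula: $\res_{\mathbb F/F}\cor_{K/F}(x) = \cor_{K\mathbb F/\mathbb F}\res_{K\mathbb F/K}(x)$, and $\res_{K\mathbb F/K}$ sends $(\lambda)\cdot[B]$ to $(\lambda)\cdot[B_{K\mathbb F}]$, so indeed $\res_{\mathbb F/F}\bigl(\cor_{K/F}((K^\times)\cdot[B])\bigr)\subseteq \cor_{K\mathbb F/\mathbb F}(((K\mathbb F)^\times)\cdot[B_{K\mathbb F}])$. The analogous statements for $\cor_{\mathbb F(X_{\mathbb F})/F(X)}$ and $\cor_{\mathbb F/F}$ follow the same pattern, using that $\cor$ of a class split by $F(X)$ is again split by $F(X)$ (transitivity of corestriction through the tower $F\subset \mathbb F\subset \mathbb F(X_{\mathbb F})$, $F\subset F(X)\subset\mathbb F(X_{\mathbb F})$).

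Having the two descended maps $\overline{\res}$ and $\overline{\cor}$ between the quotient groups displayed in the statement, I would conclude by the torsion argument: $\overline{\cor}\circ\overline{\res}$ is multiplication by $n$ on the $F$-side quotient; by Remark~\ref{rmq2.2} that quotient is identified with $\CH^2(X)_\tors$, which is $2$-primary torsion (the Chow group of a Severi--Brauer-type variety has only $p$-primary torsion for $p\mid\exp B$, here $p=2$; alternatively the numerator lies in the image of $H^3(F,\mu_2)$ which is $2$-torsion), so multiplication by the odd integer $n$ is bijective. Hence $\overline{\res}$ is injective, proving the Proposition. The main obstacle I anticipate is purely bookkeeping rather than conceptual: carefully justifying that $\res$ and $\cor$ really do respect both the kernel-subgroup in the numerator \emph{and} the corestriction-subgroup in the denominator simultaneously, so that they pass to the quotients; this rests on the projection formula and on the functoriality of the arithmetic restriction map on $H^3(-,\QZ(2))$ with respect to the base field, both of which are standard but need to be cited (e.g. from \cite{GMS03}, \cite{Pey95}) rather than reproved.
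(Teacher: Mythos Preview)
Your argument is correct and follows essentially the same route as the paper: a restriction--corestriction transfer for the odd-degree extension $\mathbb F/F$, combined with the observation that the quotient group is $2$-primary torsion, so that multiplication by $n=[\mathbb F:F]$ is bijective. The compatibility checks you outline for $\res$ and $\cor$ on numerator and denominator are exactly the computations the paper carries out (via the projection formula and transitivity of corestriction).

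The only substantive difference is in how the $2$-primary torsion is justified. The paper does this explicitly and self-containedly: it picks a biquadratic splitting field $M\supset K$ for $B$ with $[M:F]=8$, notes that $X_M\simeq\mathbf P^3_M\times\mathbf P^3_M$ so that $H^3(M,\QZ(2))\hookrightarrow H^3(M(X_M),\QZ(2))$, and then a restriction--corestriction through $M$ gives $8\xi=0$ for every $\xi$ in the numerator kernel. Your first alternative, invoking Remark~\ref{rmq2.2} and the fact that $\CH^2(X)_\tors$ is $2$-primary, is valid but pushes the work into Peyre's identification plus a structural fact about such Weil transfers that itself is most easily seen by the same splitting-field argument. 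Your second alternative, that ``the numerator lies in the image of $H^3(F,\mu_2)$'', is not accurate as stated: the kernel sits inside $H^3(F,\QZ(2))$ and is not \emph{a priori} $2$-torsion; what one actually shows (and what the paper's $M$-argument delivers) is that it is annihilated by $8$. With that correction your proof goes through.
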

\begin{proof}
Let $M\supset K\supset F$ be a separable biquadratic  extension of $K$ in $B$ which splits $B$; so $[M:F]=8$. Since  $M$ splits $B$,  we have
\[
X_M\simeq \SB(B)_M\times \SB(B)_M\simeq {\bf P}^3_M\times {\bf P}^3_M
\]
where ${\bf P}^3_M$ denotes the projective $3$-space considered as variety. The function field $M({\bf P}^3_M)$ of ${\bf P}^3_M$ being a purely transcendental extension of $M$, the function field $M(X_M)$  is a purely transcendental extension of $M$. So, we have
\[
H^3(M,\QZ(2)) \hookrightarrow  H^3(M(X_M),\QZ(2)).
\]
Let $\xi \in \Ker\Big(H^3(F,\QZ(2))\rightarrow H^3(F(X),\QZ(2))\Big)$ and denote by $\xi_{\mathbb{F}}$ its image in $H^3(\mathbb{F},\QZ(2))$ under the scalar extension. Suppose $\xi_{\mathbb{F}}= \cor_{K\mathbb{F}/\mathbb{F}}((\lambda)\cdot[B_{K\mathbb F}])$ for some $\lambda\in (K\mathbb{F})^{\times}$. Let $A$ be a central simple $F$-algebra such that $r_{K/F}(A)=B$. Note that such an algebra $A$ exists because the sequence
\[
\xymatrix @C=3pc{ 
    \text{Br}_2(F) \ar[r]^{r_{K/F}} & \text{Br}_2(K) \ar[r]^{\text{cor}_{K/F}} & \text{Br}_2(F)
     }
\]
is exact.
 One has
\begin{eqnarray*}
[\mathbb{F}:F]\xi= \text{cor}_{\mathbb{F}/F}(\xi_{\mathbb{F}}) & = & \text{cor}_{\mathbb{F}/F}\Big(\cor_{K\mathbb{F}/\mathbb{F}}((\lambda)\cdot[B_{K\mathbb F}])\Big)\\
& = & \text{cor}_{\mathbb{F}/F}\Big(N_{K\mathbb{F}/\mathbb{F}}(\lambda)\cdot [A_\mathbb{F}] \Big)\\
& = & N_{\mathbb{F}/F}\Big(N_{K\mathbb{F}/\mathbb{F}}(\lambda)\Big)\cdot [A]\\
& = & N_{K/F}\Big(N_{K\mathbb{F}/K}(\lambda)\Big)\cdot [A]\\
& = & \cor_{K/F}\Big(N_{K\mathbb{F}/K}(\lambda)\cdot [A_K]\Big) \in  \cor_{K/F}((K^{\times})\cdot [B]).
\end{eqnarray*}
This implies that the order of $\xi$ is odd in  
\[
\frac{\Ker\Big(H^3(F,\QZ(2))\rightarrow H^3(F(X),\QZ(2))\Big)}{\cor_{K/F}((K^{\times})\cdot [B])}
\]
since $[\mathbb{F}:F]$ is odd. On the other hand, consider the following commutative diagram where the vertical maps are given by scalar extension and the horizontal maps are restriction and corestriction maps
\[
\xymatrix @R=3pc @C=2pc { 
    H^3(F,\QZ(2)) \ar[r] \ar[d] & H^3(M,\QZ(2)) \ar[r] \ar@{^{(}->}[d] & H^3(F,\QZ(2)) \ar[d]  \\
     H^3(F(X),\QZ(2)) \ar[r]   & H^3(M(X_M),\QZ(2)) \ar[r]  & H^3(F(X),\QZ(2))
     }
\]
The image of $\xi$ by the top row is $[M:F]\xi=8\xi$. Moreover, a diagram chase shows that $8\xi$ is trivial in $H^3(F, \QZ(2))$ because $\xi$ is in the kernel of the left vertical map, and the central vertical map is injective. So, the order of $\xi$ is then prime to $[\mathbb{F}:F]$. It follows that $\xi\in \cor_{K/F}((K^{\times})\cdot [B])$. The proof is complete.
\end{proof} 

Now, assume that $A$ a central simple algebra of degree $8$ and exponent $2$ over $F$. Let  $K=F(\sqrt{a})$ be a quadratic field extension of $F$ contained in $A$ and let $\mathbb F$ be an odd degree extension of $F$. Denote by $B= C_AK$ the centralizer of $K$ in $A$.
\begin{remark}\label{rmq2.3}
(1) The algebra $A$ admits a decomposition adapted to $K$ if and only if $B$ has a descent to $F$. In other words, $A$ admits a decomposition adapted to $K$ if and only if $\delta_{K/F}(B)=0$. So, $\delta_{K/F}(B)\ne 0$ if $A$ is indecomposable.

\noindent
(2) One deduces from Proposition \ref{prop2.3} that if $K$ is not in a quaternion subalgebra of $A$, then the same holds over $\mathbb F$.

\noindent
(3) Let $\Ker(\text{res})$ be the kernel of the restriction map 
\[
H^3(F,\QZ(2))\longrightarrow H^3(F(\SB(A)),\QZ(2)).
\]
The quotient group
\[
\Ker(\text{res})/[A]\cdot H^1(F,\QZ(2))
\]
is identified with $\CH^2(\SB(A))_\tors$ in \cite{Pey95}. Arguing as in the proof of Proposition \ref{prop2.3}, we may see that the scalar extension map
\[
\CH^2(\SB(A))_\tors \longrightarrow \CH^2(\SB(A)_{\mathbb F})_\tors
\]
is injective. This injection may be also deduced from \cite[Corollary 1.2 and Proposition 1.3]{Kar98}.
\end{remark}

Let $A'$ be the division algebra Brauer equivalent to
\[
A'\sim A\otimes_F (a,t)_{F(t)}
\]
where $t$ is an indeterminate over $F$. Note that 
\[
A'_K\sim A_K\otimes K(t)\sim B_{K(t)}.
\]
The following proposition shows that if $\delta_{K/F}(B)$ is nonzero then the invariant $\Delta(A')$, defined in \cite[\S 11]{GPT09}, is nonzero.
\begin{proposition}\label{prop2.4}
The scalar extension map
\begin{equation}\label{eq4.2}
\frac{H^3(F,\mu_2)}{\cor_{K/F}((K^\times)\cdot[B])}\longrightarrow \frac{H^3(F(t),\mu_2)}{(F(t)^\times)\cdot[A']}
\end{equation}
is an injection.
\end{proposition}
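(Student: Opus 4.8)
The plan is to compare the two quotient groups through an intermediate object, namely the image of $\delta_{K/F}(B)$ inside $H^3(F(t),\mu_2)$ modulo the relevant relations, and to exploit the residue maps associated with the $t$-adic valuation on $F(t)$ together with the specialization $t\mapsto $ (a unit). First I would recall that $[A']=[A_{F(t)}]+(a)\cdot(t)$ in $\Br_2(F(t))$, so that $(F(t)^\times)\cdot[A']$ contains in particular $(t)\cdot[A']=(t)\cdot[A]+(t)\cdot(a)\cdot(t)=(t)\cdot[A]$ (since $(t)\cdot(t)=(t)\cdot(-1)$), and more usefully that $(F(t)^\times)\cdot[A']$ splits into a "unit part" coming from $F^\times\cdot[A']$ and the cup products with $(t)$. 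The key structural fact I want is that $H^3(F(t),\mu_2)\cong H^3(F,\mu_2)\oplus\big(\bigoplus_{\mathfrak p} H^2(F(\mathfrak p),\mu_2)\big)$ by Faddeev/Milnor's exact sequence for the rational function field, with the first residue (at $t=0$, say) detecting the cup product with $(t)$.

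The core of the argument: take $\xi\in H^3(F,\mu_2)$ whose image in the right-hand quotient of (\ref{eq4.2}) vanishes, i.e. $\xi_{F(t)}=\eta\cdot[A']$ for some $\eta\in F(t)^\times$. Writing $\eta$ as a product of a unit in the valuation ring at $t=0$ times a power of $t$, and computing residues, I expect to conclude two things: from the residue at $t=0$ (where $(a)\cdot(t)$ contributes its residue $(a)$), one reads off a relation of the shape $\operatorname{res}_{t=0}(\xi_{F(t)})=0$ together with an equation forcing the "$t$-part" of $\eta$ to interact with $(a)\cdot[B]$; from the residues at the other places and specialization $t\mapsto 1$, one recovers $\xi=f\cdot[A]$ for some $f\in F^\times$. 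The residue computation should show that the coefficient of $(t)$ forces $(a)\cdot f'\cdot[A]=0$-type data that lands $f\cdot[A]$ in $\operatorname{cor}_{K/F}((K^\times)\cdot[B])=N_{K/F}(K^\times)\cdot[A]$ (using the projection-formula identity from Remark \ref{rem2.1}). Concretely, the plan is: specialize $t$ to a parameter, use that $[A']$ restricts to $[A]+(a)\cdot(t)$, take $\partial_{t}$ of the equation $\xi=\eta\cdot[A']$ to kill the tame part, and then note that $\eta$ decomposes so that $\xi - (\text{unit part})\cdot[A]$ equals a cup product $(t)\cdot\big((a)\cdot(\text{something})+\cdots\big)$ whose vanishing over $F(t)$ (after taking residue) pins down that $(\text{unit part})\cdot[A]\in N_{K/F}(K^\times)\cdot[A]$.

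The main obstacle I anticipate is the bookkeeping of the residue at $t=0$: the class $[A']$ itself is ramified there (its residue is $(a)\in H^1(F,\mu_2)$), so $\eta\cdot[A']$ has residue $\partial_{t=0}(\eta)\cdot[A]+(\bar\eta)\cdot(a)$ where $\bar\eta$ is the leading unit, and one must carefully separate the contribution of $\partial_{t=0}(\eta)$ (an element of $H^0$, i.e. $\mathbb Z/2$) from the class $(\bar\eta)\cdot(a)\in H^2(F,\mu_2)$; matching this against $\operatorname{res}_{t=0}\xi_{F(t)}=0$ gives $(\bar\eta)\cdot(a)=0$ in $\Br_2(F)$ provided $\partial_{t=0}(\eta)$ is even, which is where the splitting $\eta=t^m\cdot(\text{unit})$ must be controlled — and this is precisely the algebraic input that forces the residue to land in $N_{K/F}(K^\times)$. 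Once that is settled, the unramified part of the equation gives $\xi=(\text{unit in }F(t)^\times)\cdot[A']$ unramified everywhere, hence by Faddeev descends to $\xi=g\cdot[A]$ for $g\in F^\times$, and combining with the residue relation $(\bar\eta)\cdot(a)=0$ — i.e. $\bar\eta\in N_{K/F}(K^\times)$ up to squares — identifies $g$ with an element of $N_{K/F}(K^\times)$, so $\xi\in\operatorname{cor}_{K/F}((K^\times)\cdot[B])$ and the map (\ref{eq4.2}) is injective. An alternative, cleaner route I would keep in reserve is to use the Weil-transfer variety $X$: since $F(X)(t)$ relates to $F(t)(X_{F(t)})$ and $\delta$ is controlled by $\CH^2(X)_\tors$ as in Remark \ref{rmq2.2}, one may deduce injectivity of (\ref{eq4.2}) from the compatibility of (\ref{eq1}) with the purely transcendental (in $t$) and then the index-reduction extension, paralleling the diagram chase in the proof of Proposition \ref{prop2.3}.
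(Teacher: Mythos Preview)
Your approach is the same as the paper's: take $\xi\in H^3(F,\mu_2)$ with $\xi_{F(t)}=(\eta)\cdot[A']$, apply the $t$-adic residue $\partial_t$, and then specialize. The paper's execution is cleaner and you have one genuine gap.

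The gap is the case $v_t(\eta)$ odd. You write ``provided $\partial_{t=0}(\eta)$ is even'' and then say this ``must be controlled'', but you never say how. In the paper this case is not a nuisance to be controlled but the easy case: if $v_t(f)=1$, writing $f=tf_0$ and computing $\partial_t$ of $f\cdot([A]+[(a,t)])=0$ gives $[A]+[(-f_0(0),a)]=0$, so $[A]$ is the class of a quaternion algebra containing $K$. Then $(F^\times)\cdot[A]=\cor_{K/F}((K^\times)\cdot[B])$ already exhausts $H^3(F,\mu_2)$ modulo itself in the sense that both quotients in (\ref{eq4.2}) are zero, and there is nothing to prove. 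You should recognise and dispatch this case rather than trying to force it into the even-valuation analysis.

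A second, smaller issue: in the even case you propose to specialize at $t=1$ to get $\xi=g\cdot[A]$ with $g\in F^\times$, and separately to use the residue at $t=0$ to get $(\bar\eta,a)=0$. But $\bar\eta=\eta(0)$, not $\eta(1)$, so the norm condition you extract does not obviously apply to the $g$ you obtained. The paper avoids this mismatch by using the specialization map $\ker\partial_t\to H^3(F,\mu_2)$ at the \emph{same} place $t=0$: this gives $\xi=(f(0))\cdot[A]$ directly, and since $(f(0),a)=0$ one has $f(0)\in N_{K/F}(K^\times)$ modulo squares, hence $\xi\in\cor_{K/F}((K^\times)\cdot[B])$. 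Your Faddeev/``unramified everywhere'' step is unnecessary and, as written, not correct ($[A']$ is ramified at $t=0$, so $(\text{unit})\cdot[A']$ is not unramified there). Drop it and use the single-place residue/specialization pair at $t=0$.
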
 
\begin{proof}
The map is clearly well-defined because
\[
\cor_{K/F}(K^\times\cdot[B])\subset \cor_{K(t)/F(t)}(K(t)^\times\cdot[B_{K(t)}])\subset (F(t)^\times)\cdot[A'].
\]
Let $\xi\in H^3(F,\mu_2)$. Suppose $\xi= f[A']=f([A]+[(a,t)])$ for some $f\in F(t)^\times$. Consider the residue map
\[
\partial_t:\, H^3(F(t), \mu_2)\,\longrightarrow \, H^2(F,\mu_2)
\]
 where $F(t)$ is equipped with its $t$-adic valuation $v_t$ (see for instance \cite[\S 7]{GMS03}). We have 
\[
\partial_t(\xi)=\partial_t(f[A]+f[(a,t)])=0.
\]
Since $f$ is taken modulo $F(t)^{\times 2}$, we may assume that $v_t(f)$ is either $0$ or $1$. 

If $v_t(f)=1$, that is, $f=tf_0$ for some $t$-adic unit $f_0$, then
\begin{eqnarray*}
\partial_t(\xi)=\partial_t(f[A]+f[(a,t)]) & =& [A]+\partial_t((tf_0, a,t))\\
& = & [A]+ \partial_t((-f_0, a,t))\\
& = & [A]+ [(-f_0(0), a)]=0.
\end{eqnarray*}
Therefore $A$ is Brauer equivalent to $(-f_0(0), a)$. It follows that the two quotients of the map (\ref{eq4.2})  are trivial. In this case, there is nothing to show. 

Suppose $v_t(f)=0$. One has
\[
\partial_t(f[A]+f[(a,t)])=[(f(0),a)]=0.
\]
On the other hand, $\xi=f(0)[A]$ by the specialization map $\ker\partial_t \to H^3(F,\mu_2)$ associated with $t$ at $0$. Since $(f(0),a)=0$, we deduce that $\xi\in (N_{K/F}(K^\times))\cdot[A]=\cor_{K/F}((K^\times)\cdot[B])$. The proof is complete.
\end{proof}
\begin{remark}\label{rmq2.4}
According to Remark \ref{rem2.1}, the image of $\delta_{K/F}(B)$ by the scalar extension map (\ref{eq4.2}) is $\Delta(A')$. The proof of Theorem \ref{thmB} provides an example of indecomposable algebra $A'$ of degree $8$ and exponent $2$ such that $\Delta(A')$ is nonzero.
\end{remark}
\section{Proofs of the main statements}
\subsection{Proof of Theorem \ref{thmA}}
We start out by the following lemma (a part of \cite[Theorem 2]{Kah90}):
\begin{lemma}\label{lm3.1}
Every central simple $F$-algebra of exponent $2$ is Brauer equivalent to a tensor product of at most $\frac{1}{2}u(F)-1$ quaternion algebras.
\end{lemma}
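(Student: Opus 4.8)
The plan is to bound the number of quaternion factors via an auxiliary quadratic-form invariant. For a class $[A]\in\Br_2(F)$, set
\[
\mu(A)=\min\bigl\{\dim q\ :\ q\in I^2F,\ e_2(q)=[A]\bigr\},
\]
and let $\lambda(A)$ denote the least integer $n$ for which $A$ is Brauer equivalent to a tensor product of $n$ quaternion $F$-algebras. I would deduce the lemma from the two inequalities $\lambda(A)\le\tfrac12\mu(A)-1$ and $\mu(A)\le u(F)$.

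First I would check that $\mu(A)$ is finite and, in the non-degenerate case, at least $4$. By Merkurjev's theorem $[A]$ is a finite sum of quaternion symbols $\sum_i(a_i)\cup(b_i)$, so the orthogonal sum $\perp_i\langle\langle a_i,b_i\rangle\rangle$ lies in $I^2F$ and has Clifford invariant $[A]$; hence $\mu(A)<\infty$ and $\lambda(A)<\infty$. We may assume $A$ is not split (otherwise nothing is to be proved), so $[A]\ne 0$ and any form in $I^2F$ of dimension $\le 2$ — being even-dimensional of trivial signed discriminant, hence hyperbolic — has Clifford invariant $0\ne[A]$; thus $\mu(A)\ge 4$.

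Next comes $\mu(A)\le u(F)$: choose $q\in I^2F$ of minimal dimension $\mu(A)=:2m$ with $e_2(q)=[A]$. If $q$ were isotropic it would contain a hyperbolic plane, and deleting it would yield a smaller form, still Witt equivalent to $q$ hence still in $I^2F$ with the same Clifford invariant, contradicting minimality; so $q$ is anisotropic and $2m=\dim q\le u(F)$. For $\lambda(A)\le\tfrac12\mu(A)-1$ I would invoke the structural fact recalled in Section~\ref{sec2}: since $q\in I^2F$ one has $C(q)\simeq M_2(E(q))$ with $E(q)$ totally decomposable. Comparing $F$-dimensions gives $\deg E(q)=2^{m-1}$, so $E(q)$ is a tensor product of $m-1$ quaternion algebras; and since $d(q)$ is trivial, $e_2(q)$ is the class of the (even) Clifford algebra, so $[A]=e_2(q)=[E(q)]$. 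Therefore $A$ is Brauer equivalent to the tensor product $E(q)$ of $m-1=\tfrac12\mu(A)-1\le\tfrac12u(F)-1$ quaternion algebras, which is the claim.

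I do not expect a genuine obstacle: the only real move is to introduce $\mu(A)$ and to observe that a dimension-minimal representative of $[A]$ inside $I^2F$ is automatically anisotropic, which is precisely what links $u(F)$ to the quaternion count. The points requiring a little care are the identification $e_2(q)=[C(q)]=[E(q)]$, which relies on $q$ having trivial discriminant so that $C(q)$ is central simple over $F$, and excluding the degenerate situation in which $F$ is quadratically closed and $\Br_2(F)=0$.
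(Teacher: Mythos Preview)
Your proof is correct and follows essentially the same route as the paper: pick an anisotropic form $q\in I^2F$ with Clifford invariant $[A]$ (via Merkurjev's theorem), bound $\dim q\le u(F)$, and read off $\tfrac12\dim q-1$ quaternion factors from the Clifford algebra. The only cosmetic difference is that the paper extracts the quaternion count by passing to a codimension-one subform $\varphi'$ and using $C(\varphi)\sim C_0(\varphi')$, whereas you use the $C(q)\simeq M_2(E(q))$ description recalled in the preliminaries; both give the same bound.
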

\begin{proof}
Let $A$ be a central simple $F$-algebra of exponent $2$. Let $\varphi\in I^2F$ be an anisotropic quadratic form such that $C(\varphi)\sim A$ (by Merkurjev's Theorem \cite{Mer81}). The form $\varphi$ being anisotropic, $\dim\varphi \le u(F)$. Let $\varphi'$ be a subform of $\varphi$ of codimension $1$. The algebra $C(\varphi)$ is Brauer equivalent to $C_0(\varphi')$ which is a tensor product of $\frac{1}{2}(\dim(\varphi')-1)=\frac{1}{2}(\dim\varphi)-1$ quaternion algebras.
\end{proof}
Now, let $A$ be a degree $8$ and exponent $2$ algebra over a field of $u$-invariant smaller than or equal to $8$. By Lemma \ref{lm3.1}, $A$ is Brauer equivalent to a tensor product of three quaternion algebras. Therefore, this equivalence is an isomorphism by dimension count; this proves that $A$ is decomposable. This concludes the proof of Theorem \ref{thmA}.

\subsection{A consequence}
Let $U$ and $V$ be smooth complete geometrically irreducible varieties over $F$. In the appendix, Merkurjev gives conditions under which the scalar extension map $\CH (V)\longrightarrow \CH(V_{F(U)})$ is injective. For instance, let $A$ be central simple $F$-algebra of degree $8$ and exponent $2$ and let $K$ be a quadratic separable extension of $F$ contained in $A$. Denote by $X$ the Weil transfer of $\SB(C_AK)$ over $F$. Let $q$ be a quadratic form over $F$ with $\dim q\geq 9$. It follows from Theorem \ref{thmA.2} that the scalar extension map
\begin{equation}\label{eq2}
\CH^2(X)_\tors \longrightarrow \CH^2(X_{F(q)})_\tors
\end{equation}
is injective, where $F(q)$ is the function field  of the projective quadric defined by $q=0$. This property does not hold anymore if one replaces $X$ by $\SB(A)$. Indeed, Theorem \ref{thmA} implies the following: 
\begin{corollary}\label{cor4.1}
Let $A$ be a central simple algebra of degree $8$ and exponent $2$ over $F$. Assume that $\CH^2(\SB(A))_\tors\ne 0$. Then there exist an extension $F'$ of $F$ and a $9$-dimensional quadratic form $q$ defined over $F_\ell$ such that the scalar extension map \[
\CH^2(\SB(A)_{F'})_\tors \longrightarrow \CH^2(\SB(A)_{F'(q)})_\tors
\]
is not injective.
\end{corollary}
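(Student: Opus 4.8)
The plan is to argue by contradiction: if the corollary were false, the torsion in $\CH^2(\SB(A))$ would persist to an extension of $F$ of $u$-invariant at most $8$, which Theorem \ref{thmA} forbids. So assume, towards a contradiction, that for \emph{every} extension $F'/F$ and \emph{every} $9$-dimensional quadratic form $q$ over $F'$ the scalar extension map
\[
\CH^2(\SB(A)_{F'})_\tors\longrightarrow\CH^2(\SB(A)_{F'(q)})_\tors
\]
is injective; this is a well-defined group homomorphism since $\SB(A)_{F'(q)}\to\SB(A)_{F'}$ is flat and flat pullback preserves the torsion subgroup.

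First I would build an extension $\mathbb M\supseteq F$ over which every $9$-dimensional quadratic form is isotropic; as a form of dimension $>9$ has a $9$-dimensional subform, this already gives $u(\mathbb M)\le 8$. The construction is the usual closing-up: set $F_0=F$; let $F_{i+1}$ be obtained from $F_i$ by adjoining, successively (by transfinite recursion over the isometry classes of $9$-dimensional forms), the function fields of all $9$-dimensional quadratic forms that are still anisotropic over the field built so far; and put $\mathbb M=\bigcup_{i\ge 0}F_i$. A $9$-dimensional form over $\mathbb M$ has its coefficients in some $F_i$, hence becomes isotropic over $F_{i+1}\subseteq\mathbb M$.

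Next I would check that the torsion is preserved along this tower. Every elementary step of the construction is the passage from a field $E$ to the function field $E(q)$ of a $9$-dimensional quadratic form, so the standing assumption makes $\CH^2(\SB(A)_E)_\tors\to\CH^2(\SB(A)_{E(q)})_\tors$ injective; composing these (also across the transfinite steps, using that Chow groups commute with filtered direct limits of the base field), every transition map $\CH^2(\SB(A)_{F_i})_\tors\to\CH^2(\SB(A)_{F_{i+1}})_\tors$ is injective, and therefore so is the canonical map from $\CH^2(\SB(A))_\tors$ into $\CH^2(\SB(A)_{\mathbb M})_\tors=\varinjlim_i\CH^2(\SB(A)_{F_i})_\tors$. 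In particular $\CH^2(\SB(A)_{\mathbb M})_\tors\ne 0$. But $u(\mathbb M)\le 8$, so Theorem \ref{thmA} forces $A_{\mathbb M}$ to be decomposable (by a dimension count it is then a tensor product of three quaternion algebras, exactly as in the proof of that theorem); and a decomposable central simple algebra of degree $8$ and exponent dividing $2$ has torsion-free $\CH^2$ of its Severi--Brauer variety, by Karpenko's computations of torsion in $\CH^2$ of Severi--Brauer varieties (\cite{Kar95}, \cite{Kar98}) --- in the case $\ind(A_{\mathbb M})=8$ this is precisely the contrapositive of \cite[Proposition 5.3]{Kar98}. This contradicts $\CH^2(\SB(A)_{\mathbb M})_\tors\ne 0$, and the corollary follows.

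The point I expect to demand the most care is the interaction between the closing-up construction and the torsion subgroup: the (transfinite) recursion must be arranged carefully enough that $u(\mathbb M)\le 8$ genuinely holds, while keeping every intermediate extension of the permitted type --- the function field of a $9$-dimensional quadratic form --- so that the injectivity hypothesis really propagates through the whole tower and then through the direct limit. A secondary, more technical matter is the index bookkeeping for $A_{\mathbb M}$ when invoking Karpenko's vanishing, since the statement quoted from \cite{Kar98} explicitly excludes the case of index $4$.
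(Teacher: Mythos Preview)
Your proof is correct and follows essentially the same route as the paper: both construct Merkurjev's tower $\mathbb M={\rm M}_8^u(F)$ of function fields of $9$-dimensional forms, invoke Theorem~\ref{thmA} (together with Karpenko's criterion) to obtain $\CH^2(\SB(A)_{\mathbb M})_\tors=0$, and then locate the step in the tower at which the torsion vanishes. The only difference is cosmetic --- you argue by contradiction while the paper argues directly --- and the technical worries you flag (the transfinite bookkeeping in the tower, and the index-$4$ case when invoking Karpenko's vanishing) are present, and equally harmless, in the paper's own proof.
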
 
For the proof we need the following  particular class of fields of $u$-invariant at most $8$: starting with any field $F$ over which there is an anisotropic quadratic form of dimension $8$, one defines a tower of fields
\[
F=F_0\subset F_1\subset\cdots \subset F_{\infty}=\bigcup_iF_i =:{\rm M}_8^u(F) \label{u-invariant}
\]
inductively as follows: if $F_{i-1}$ is already given, the field $F_i$ is the composite of all function fields $F_{i-1}(\varphi)$, where $\varphi$ ranges over (the isometry classes of) all $9$-dimensional forms over $F_{i-1}$. Clearly, any $9$-dimensional form over $\mathbb M$ is isotropic. Therefore $u({\rm M}_8^u(F))\le 8$. Such a construction is due to Merkujev (see \cite{Mer92}). 

\begin{proof}[Proof of Corollary \ref{cor4.1}]
\begin{sloppypar}
Let $\mathbb M={\rm M}_8^u(F)$ be an extension as above; recall that $u(\mathbb M)\le 8$. The algebra $A_{\mathbb M}$ being decomposable by Theorem \ref{thmA}, we have $\CH^2(\SB(A)_{\mathbb M})_{\text{tors}}=0$. Since $\mathbb{M}=\bigcup_i F_i$, there exists $\ell$ such that $\CH^2(\SB(A)_{F_\ell})_{\text{tors}}\ne 0$ and $\CH^2(\SB(A)_{F_{\ell+1}})_\tors=0$. By definition, $F_{\ell+1}$ is the composite of all function fields $F_\ell(q)$, where $q$ ranges over all $9$-dimensional forms over $F_\ell$. Hence, there exists an extension $F'$ of $F_\ell$ and a $9$-dimensional form $q$ over $F_\ell$ such that $\CH^2(\SB(A)_{F'})_\tors\ne 0$ and $\CH^2(\SB(A)_{F'(q)})_\tors=0$ as was to be shown. 
\end{sloppypar} 
\end{proof}
\subsection{Proof of Theorem \ref{thmB}}
Put $\mathbb M ={\rm M}_8^u(F)$ and $B=C_AK$. As in the previous section we denote by $A'$ be the division algebra Brauer equivalent to $A\otimes_F(a,t)_{F(t)}$.
\begin{proof}[First proof]
 Recall that $\delta_{K/F}(C_AK)$ is in $\CH^2(X)_{\text{tors}}$ by Remark \ref{rmq2.2}, where $X$ is the Weil transfer of $\SB(C_AK)$. Since $\delta_{K/F}(C_AK)\ne 0$ (see Remark \ref{rmq2.3}), it follows by  injection (\ref{eq2}) that the extension $\mathbb M(\sqrt{a})$ is not in a quaternion subalgebra of $A_{\mathbb M}$. By  \cite[Proposition 2.10]{Tig87} (or \cite{Bar}) the algebra $A'_{\mathbb M}$ is an indecomposable algebra of degree $8$ and exponent $2$. This concludes the proof.  
\end{proof}
\begin{proof}[Second proof]
Consider the following injections
\[
\frac{H^3(F,\mu_2)}{\cor_{K/F}((K^\times)\cdot [B])}\hookrightarrow \frac{H^3(\mathbb M,\mu_2)}{\cor_{\mathbb MK/\mathbb M}(((\mathbb MK)^\times)\cdot [B_{\mathbb MK}])}\hookrightarrow \frac{H^3(\mathbb M(t),\mu_2)}{(\mathbb M(t)^\times)\cdot [A'_{\mathbb M}]}
\]
where the first is due to Merkurjev (injection (\ref{eq2})) and the second by Proposition \ref{prop2.4}. Since $\delta_{K/F}(B)\ne 0$ and its image by the composite of these above injections is $\Delta(A'_{\mathbb M})$, we have $\Delta(A'_{\mathbb M})\ne 0$. This also means $A'_{\mathbb M}$ is indecomposable by Garibaldi-Parimala-Tignol \cite[\S 11]{GPT09}.
\end{proof}

\subsection{Proof of Theorem \ref{thmC}}
Here, we also need a particular class of fields of  cohomological dimension at most $3$: starting with any field $F$ over which there is an anisotropic $3$-fold Pfister form, one defines a tower of fields
\[
F=F_0\subset F_1\subset\cdots \subset F_{\infty}=\bigcup_iF_i =:{\rm M}_3^{cd}(F)
\]
inductively as follows: the field $F_{2i+1}$ is the maximal odd degree extension of $F_{2i}$; the field $F_{2i+2}$ is the composite of all the function fields  $F_{2i+1}(\pi)$, where $\pi$ ranges over all $4$-fold Pfister forms over $F_{2i+1}$. The arguments used by  Merkurjev in \cite{Mer92}, show that $cd_2({\rm M}_3^{cd}(F))\le 3$. Merkurjev used such a technique for constructing fields of cohomological dimension $2$ over which there exist anisotropic quadratic forms of dimension  $2d$ for an arbitrary integer $d$, i.e, counterexamples to Kaplansky's conjecture in the theory of quadratic forms.

\begin{sloppypar}
Now, let $A$ be a central simple algebra of degree $8$ and exponent $2$ such that $\CH^2(\SB(A))_\tors\ne 0$. Put $\mathbb M={\rm M}_3^{cd}(F)$ and let $\mathbb F$ be an odd degree extension of $F$. The scalar extension map
\[
\CH^2(\SB(A))_\tors \longrightarrow \CH^2(\SB(A)_{\mathbb F})_\tors
\]
\end{sloppypar}
is injective by Remark \ref{rmq2.3}. On the other hand, let $\pi$ be a $4$-fold Pfister form over $\mathbb F$. It follows from Theorem \ref{thmA.1} that the scalar extension map
\begin{equation}\label{eq3}
\CH^2(\SB(A)_{\mathbb F})_\tors \longrightarrow \CH^2(\SB(A)_{{\mathbb F(\pi)}})_\tors
\end{equation}
is injective. We deduce from these two latter injections that $\CH^2(\SB(A)_{\mathbb M})_\tors\ne 0$; and so $A_{\mathbb M}$ is indecomposable. The algebra $A_{\mathbb M}$ being indecomposable, we must have $cd_2(\mathbb M)> 2$ by Theorem \ref{thm2.1}. Hence, $cd_2(\mathbb M)=3$. This completes the proof.

\bigskip
\subsection*{Acknowledgements}
This work is part of my PhD thesis at Universit\'e catholique de Louvain and Universit\'e Paris 13. I would like to thank my thesis supervisors, Anne Qu\'eguiner-Mathieu and Jean-Pierre Tignol, for directing me towards this problem. I would also like to thank Karim Johannes Becher for suggesting  Theorem \ref{thmA} and the idea of the proof of Lemma \ref{lm4.1}. I am particularly grateful to Alexander S. Merkurjev for providing the appendix of the paper.

\vspace{1cm}

\appendix
\section{}
{\markboth{\MakeUppercase{{A. S. Merkurjev}}}{}}
{\markright{\MakeUppercase {O{\scshape n the} C{\scshape how} G{\scshape roup of} C{\scshape ycles of} C{\scshape odimension $2$}}}}
\begin{center}
{\large O{\scshape n the} C{\scshape how} G{\scshape roup of} C{\scshape ycles of} C{\scshape odimension $2$}}
\end{center}
\begin{center}
\author{{\scshape by} A{\scshape lexander} S. M{\scshape erkurjev}}
\end{center}
\bigskip
Let $X$ be an algebraic variety over $F$. We write $A^i(X,K_n)$ for the homology
group of the complex
\[
\coprod_{x\in X^{(i-1)}}K_{n-i+1}\(F(x)\)\stackrel{\partial}{\llg}
\coprod_{x\in X^{(i)}}K_{n-i}\(F(x)\)\stackrel{\partial}{\llg}
\coprod_{x\in X^{(i+1)}}K_{n-i-1}\(F(x)\),
\]
where $K_j$ are the Milnor $K$-groups and $X^{(i)}$ is the set of points in $X$ of codimension $i$ 
(see \cite[\S 5]{Rost98a}). In particular, $A^i(X,K_i)=\CH^i(X)$ is the Chow group
of classes of codimension $i$ algebraic cycles on $X$.

Let $X$ and $Y$ be smooth complete geometrically irreducible varieties over $F$.

\begin{proposition}\label{prop1}
Suppose that for every field extension $K/F$ we have:
\begin{enumerate}
  \item The natural map $\CH^1(X)\llg \CH^1(X_K)$ is an isomorphism of torsion free groups,
  \item The product map $\CH^1(X_K)\tens K^\times\llg A^1(X_K,K_2)$
is an isomorphism.
\end{enumerate}

\noindent Then the natural sequence
\[
0\llg \(\CH^1(X)\tens \CH^1(Y)\)\oplus\CH^2(Y)\llg\CH^2(X\times Y)\llg \CH^2(X_{F(Y)})
\]
is exact.
\end{proposition}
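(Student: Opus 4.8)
The plan is to analyze the codimension-$2$ cycles on $X\times Y$ by stratifying according to the generic point of $Y$ and using the localization (Gersten-type) sequence for the $K$-cohomology complexes $A^i(-,K_n)$. Concretely, I would set up the exact localization sequence relating $\CH^2(X\times Y)$, the group $\CH^2(X_{F(Y)})$ obtained in the limit over open subsets of $Y$, and the contributions coming from cycles supported over codimension-$1$ points of $Y$. The target map $\CH^2(X\times Y)\to\CH^2(X_{F(Y)})$ is restriction to the generic fibre, and its kernel is, by the localization sequence, the image of $\bigoplus_{y\in Y^{(1)}}A^1(X_{F(y)},K_1)=\bigoplus_{y\in Y^{(1)}}\CH^1(X_{F(y)})$. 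So the first task is to identify this kernel with $\bigl(\CH^1(X)\tens\CH^1(Y)\bigr)\oplus\CH^2(Y)$, and the second is to check injectivity of the inclusion of the latter group into $\CH^2(X\times Y)$.

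**For the kernel computation** I would use hypothesis (1): since $\CH^1(X_K)=\CH^1(X)$ is torsion-free and independent of $K$, the boundary-and-image analysis of the spectral sequence (or the filtration of $\CH^2(X\times Y)$ by codimension of support over $Y$) breaks into two graded pieces. The piece of cycles dominating $Y$ generically, modulo those not meeting the generic fibre, is exactly $\CH^2(X_{F(Y)})$; the piece supported in codimension $\geq 1$ over $Y$ is governed by the complex $\coprod_{y\in Y^{(0)}}\CH^1(X_{F(y)})\to\coprod_{y\in Y^{(1)}}A^1(X_{F(y)},K_1)\to\cdots$, whose relevant homology is where $\CH^1(X)\tens\CH^1(Y)$ (cycles of the form ``divisor on $X$ times divisor on $Y$'') and $\CH^2(Y)$ (cycles pulled back from $Y$, i.e. supported over $Y^{(2)}$ with $X$-component all of $X$) appear. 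Here hypothesis (1), giving $\CH^1(X_{F(y)})=\CH^1(X)$ for all residue fields, is what lets me pull the constant group $\CH^1(X)$ out of the complex and recognize the remaining complex as the Gersten complex computing $\CH^1(Y)$ and $\CH^2(Y)$; hypothesis (2) is needed to control the $A^1(X_K,K_2)$ terms that enter one differential further along and to guarantee that no extra classes contribute.

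**The injectivity of the map** $\bigl(\CH^1(X)\tens\CH^1(Y)\bigr)\oplus\CH^2(Y)\hookrightarrow\CH^2(X\times Y)$ I would prove by constructing retractions: composing with pullback along $X\times Y\to Y$ and with the action of a rational point (or, in general, a correspondence) splits off the $\CH^2(Y)$ summand, while pairing against a divisor class on $X$ via the projection formula and intersection with $\CH^1(X\times Y)\supset \CH^1(X)\boxtimes\CH^1(Y)$ — using that $\CH^1(X\times Y)=\CH^1(X)\oplus\CH^1(Y)$, again by (1) — detects the $\CH^1(X)\tens\CH^1(Y)$ summand. Torsion-freeness of $\CH^1(X)$ from (1) is what makes the tensor product well-behaved and the splitting clean.

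**The main obstacle** I anticipate is the bookkeeping in the localization/spectral-sequence argument: one must show that the $d_2$ (or higher) differentials out of the $A^1(X_{F(y)},K_2)$-terms for $y\in Y^{(0)}$ land correctly so that the subquotient computing the kernel is precisely $\CH^1(X)\tens\CH^1(Y)\oplus\CH^2(Y)$ with no leftover terms and no further relations — this is exactly where both hypotheses (1) and (2) are used in tandem, (2) ensuring $A^1(X_K,K_2)\cong\CH^1(X)\tens K^\times$ so that the relevant differential is identified with $\CH^1(X)\tens(\text{divisor map on }Y)$, i.e. with $\id_{\CH^1(X)}\tens\,\partial$ computing $\CH^1(Y)$. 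Once that differential is identified, exactness of the claimed sequence follows formally.
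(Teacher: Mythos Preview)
Your overall strategy---analyzing $\CH^2(X\times Y)$ via the spectral sequence for the projection $X\times Y\to Y$ and identifying the relevant $E_2$-terms as $E_2^{2,0}=\CH^2(Y)$, $E_2^{1,1}=\CH^1(X)\otimes\CH^1(Y)$, $E_2^{0,1}=\CH^1(X)\otimes F^\times$ using hypotheses (1) and (2)---is exactly the paper's approach, and that part of your outline is correct.

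The gap is in how you pass from the $E_2$-page to the conclusion. Your proposed retraction splitting $\CH^2(Y)$ off from $\CH^2(X\times Y)$ via ``a rational point (or, in general, a correspondence)'' does not go through: neither hypothesis forces $X$ to carry a zero-cycle of degree $1$. In the intended application (Example~\ref{ex1}: a smooth projective quadric of dimension $\geq 3$) the variety $X$ may be anisotropic with index $2$, so the best a zero-cycle buys you is a retraction up to multiplication by $2$, which cannot detect $2$-torsion in $\CH^2(Y)$---and torsion is precisely what matters downstream. In spectral-sequence terms, the issue is the differential $d_2\colon E_2^{0,1}\to E_2^{2,0}$, i.e.\ $\CH^1(X)\otimes F^\times\to\CH^2(Y)$, which must vanish for $E_\infty^{2,0}=\CH^2(Y)$. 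Your last paragraph conflates this $d_2$ with the $d_1$-differential $E_1^{0,1}\to E_1^{1,1}$: identifying $d_1$ with $\id_{\CH^1(X)}\otimes\partial$ computes $E_2$, but says nothing about $d_2$.

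The paper's resolution avoids any rational-point assumption by working one degree lower: it shows the edge map $A^1(X\times Y,K_2)\to E_2^{0,1}=\CH^1(X)\otimes F^\times$ is surjective, exhibiting the exterior product
\[
\CH^1(X)\otimes F^\times = A^1(X,K_1)\otimes A^0(Y,K_1)\longrightarrow A^1(X\times Y,K_2)
\]
as a section. Surjectivity of this edge map forces $E_\infty^{0,1}=E_2^{0,1}$, hence $d_2=0$, hence $E_\infty^{2,0}=E_2^{2,0}=\CH^2(Y)$, and the claimed exact sequence follows. This product-map splitting in degree $1$ is the key idea your outline is missing.
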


\begin{proof}
Consider the spectral sequence
 \[
 E_1^{p,q}=\coprod_{y\in Y^{(p)}} A^q(X_{F(y)},K_{2-p})\Longrightarrow A^{p+q}(X\times Y,K_2)
 \]
for the projection $X\times Y\llg Y$ (see \cite[Cor. 8.2]{Rost98a}). The nonzero terms of the first page are the following:

\[
\xymatrix{
\CH^2(X_{F(Y)})  \\
A^1(X_{F(Y)},K_{2}) \ar[r] & \coprod_{y\in Y^{(1)}} \CH^1(X_{F(y)})\\
A^0(X_{F(Y)},K_{2}) \ar[r]& \coprod_{y\in Y^{(1)}} A^0(X_{F(y)},K_{1}) \ar[r]& \coprod_{y\in Y^{(2)}} \CH^0(X_{F(y)}).
}
\]

\medskip

Then $E_1^{2,0}=\coprod_{y\in Y^{(2)}} \Z$
is the group of cycles on $X$ of codimension $2$ and $E_1^{1,0}=\coprod_{y\in Y^{(1)}} F(y)^\times$
as $X$ is complete. It follows that $E_2^{2,0}=\CH^2(Y)$.

By assumption, the differential $E_1^{0,1}\llg E_1^{1,1}$ is identified with the map
\[
\CH^1(X)\tens \Big(F(Y)^\times\llg \coprod_{y\in Y^{(1)}} \Z\Big).
\]
Since $Y$ is complete and $\CH^1(X)$ is torsion free, we have $E_2^{0,1}=\CH^1(X)\tens F^\times$ and $E_\infty^{1,1}=E_2^{1,1}=\CH^1(X)\tens \CH^1(Y)$.

The edge map
\[
A^1(X\times Y,K_2)\llg E_2^{0,1}=\CH^1(X)\tens F^\times
\]
is split by the product map
\[
\CH^1(X)\tens F^\times =A^1(X,K_1)\tens A^0(Y,K_1)\llg A^1(X\times Y,K_2),
\]
hence the edge map is surjective. Therefore, the differential $E_2^{0,1}\llg E_2^{2,0}$ is trivial and hence
$E_\infty^{2,0}= E_2^{2,0}=\CH^2(Y)$. Thus, the natural homomorphism
\[
\CH^2(Y)\llg\Ker\(\CH^2(X\times Y)\llg \CH^2(X_{F(Y)})\)
\]
is injective and its cokernel is isomorphic to $\CH^1(X)\tens \CH^1(Y)$. The statement follows.
\end{proof}

\begin{example}\label{ex1}
Let $X$ be a projective homogeneous variety of a semisimple algebraic group over $F$. 
There exist an \'etale $F$-algebra $E$ and an Azumaya $E$-algebra $A$ such that for $i=0$ and $1$, we have an exact sequence
\[
0\llg A^1(X,K_{i+1})\llg K_i(E) \xra{\rho} H^{i+2}\(F,\Q/\Z(i+1)\),
\]
where $\rho(x)=N_{E/F}\((x)\cup[A]\)$ (see \cite{Merkurjev95} and \cite{MT95}). If the algebras $E$ and $A$ are split, then $\rho$ is trivial 
and for every field extension $K/F$,
\[
\CH^1(X)\simeq K_0(E)\simeq K_0(E\tens K)\simeq \CH^1(X_K), 
\]
\[A^1(X_K,K_{2})\simeq K_1(E\tens K)\simeq K_0(E)\tens K^\times\simeq \CH^1(X_K)\tens K^\times.
\]
Therefore, the condition $(1)$ and $(2)$ in Proposition \ref{prop1} hold. For example, if $X$ is a smooth projective quadric of dimension at least $3$,
then $E=F$ and $A$ is split.
\end{example}

Now consider the natural complex
\begin{equation}\label{seq2}
\CH^2(X)\oplus\(\CH^1(X)\tens\CH^1(Y)\)\llg \CH^2(X\times Y)\llg \CH^2(Y_{F(X)}).
\end{equation}

\begin{proposition}\label{prop2}
Suppose that 
\begin{enumerate}
  \item The Grothendieck group $K_0(Y)$ is torsion-free,
  \item The product map $K_0(X)\tens K_0(Y)\llg K_0(X\times Y)$ is an isomorphism.
\end{enumerate}

\noindent Then the sequence (\ref{seq2}) is exact.
\end{proposition}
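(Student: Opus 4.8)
The plan is to prove the two inclusions in (\ref{seq2}) separately. The inclusion of the image of the left-hand map into the kernel of the right-hand map is routine: the generic fibre of $p_X\colon X\times Y\to X$ is $Y_{F(X)}$, and on it $p_X^*\gamma$ restricts to the pull-back of $\gamma|_{\Spec F(X)}\in\CH^2(\Spec F(X))=0$ for $\gamma\in\CH^2(X)$, while an external product $p_X^*\alpha\cdot p_Y^*\beta$ restricts to $(\alpha|_{\Spec F(X)})\cdot\beta_{F(X)}$ with $\alpha|_{\Spec F(X)}\in\CH^1(\Spec F(X))=0$. So the content is the reverse inclusion, and I would obtain it through $K$-theory.

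For a smooth variety $V$ the cycle class map identifies $\CH^2(V)$ with the second graded quotient $\mathrm{gr}^2K_0(V)=F^2K_0(V)/F^3K_0(V)$ of the topological filtration $F^\bullet$ (equivalently, the $\gamma$-filtration) on $K_0(V)$; this is Riemann--Roch without denominators in codimension $\le 2$. Using hypothesis (2) I write $K_0(X\times Y)=K_0(X)\otimes_{\Z}K_0(Y)$ through external product, and the central step is to show that the filtration on the product is the convolution of those on the factors,
\[
F^nK_0(X\times Y)\;=\;\textstyle\sum_{i+j\ge n}p_X^*F^iK_0(X)\cdot p_Y^*F^jK_0(Y).
\]
The inclusion $\supseteq$ is multiplicativity of the ($\gamma$-)filtration; for $\subseteq$ I would use that the $\gamma$-filtration has the analogous product decomposition once the Künneth map on $K_0$ is an isomorphism. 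Hypothesis (1) enters here and below to ensure that tensoring with $K_0(Y)$ and with each $F^jK_0(Y)$ is exact, so that the graded pieces add up without $\Tor$-corrections. Taking $n=2$ and splitting off the rank-one summand $\Z\cdot[\cO]$ on each factor yields
\[
\CH^2(X\times Y)\;=\;p_X^*\CH^2(X)\ \oplus\ \bigl(p_X^*\CH^1(X)\cdot p_Y^*\CH^1(Y)\bigr)\ \oplus\ p_Y^*\CH^2(Y),
\]
so that the left-hand map of (\ref{seq2}), enlarged by $p_Y^*$ on $\CH^2(Y)$, is surjective.

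Finally I would identify the restriction to $Y_{F(X)}$: it is induced by $K_0(X\times Y)\to K_0(Y_{F(X)})$, which in the Künneth decomposition is $(\rank_X)\otimes\mathrm{id}$ followed by base change $K_0(Y)\to K_0(Y_{F(X)})$. Since $F^1K_0(X)=\ker(\rank_X)$, the first two summands in the last display restrict to zero, and one is reduced to showing that the contribution of the third summand $p_Y^*\CH^2(Y)$ to the kernel already lies among pull-backs from $X$ and external products of divisors. Concretely, a class of $\CH^2(Y)$ that dies over $F(X)$ produces a codimension-two class on $X\times Y$ which, generically over $X$, is supported on a product $Z\times Y$ with $Z\subsetneq X$ closed, and is therefore the push-forward of a divisor class on $Z\times Y$; the components of that divisor class pulled back from $Z$, respectively from $Y$, push forward into $p_X^*\CH^2(X)$, respectively into $p_X^*\CH^1(X)\cdot p_Y^*\CH^1(Y)$, and the $K_0$-Künneth property --- used, after passing to the limit, for the products $Z\times Y$ --- forbids a genuinely mixed component. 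This last point, equivalently the convolution formula above with $F^2$ and $F^3$ kept under exact control, is the technical heart of the argument and the place where hypotheses (1) and (2) are used in full strength; once it is established, the kernel equals the image and (\ref{seq2}) is exact.
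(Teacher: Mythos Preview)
Your argument has a real gap at precisely the point you flag as ``the technical heart.'' The claim that the K\"unneth isomorphism $K_0(X)\otimes K_0(Y)\simeq K_0(X\times Y)$ forces the topological (or $\gamma$-) filtration to decompose as a convolution,
\[
F^nK_0(X\times Y)=\sum_{i+j\ge n}p_X^*F^iK_0(X)\cdot p_Y^*F^jK_0(Y),
\]
is not proved and is not a standard consequence of the hypotheses. The inclusion $\supseteq$ is multiplicativity, but $\subseteq$ requires knowing that a class supported in codimension $\ge n$, when expressed via K\"unneth as $\sum a_i\otimes b_i$, can be rewritten with each $a_i\otimes b_i$ in a suitable filtered piece; nothing in the hypotheses gives you this. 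As a result, the direct sum decomposition of $\CH^2(X\times Y)$ you derive is stronger than the proposition (which asserts only exactness in the middle, not surjectivity of the enlarged map) and is not established. Your final paragraph then sketches a second route, closer in spirit to what actually works, but the sentence ``the $K_0$-K\"unneth property \ldots\ forbids a genuinely mixed component'' is again an assertion, not an argument.

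The paper's proof avoids the filtration question altogether. It observes that the kernel of $\CH^2(X\times Y)\to\CH^2(Y_{F(X)})$ is generated by classes $[Z]$ of integral subschemes $Z$ not dominant over $X$; for such $Z$ one has $[\cO_Z]\in\Ker\bigl(K_0(X\times Y)\to K_0(Y_{F(X)})\bigr)=I_0(X)\otimes K_0(Y)$, where $I_0(X)=\Ker(\rank)$. Riemann--Roch gives $[Z]=-c_2([\cO_Z])$, so it suffices to show $c_2\bigl(I_0(X)\otimes K_0(Y)\bigr)$ lands in the image $M$ of the left-hand map. This is checked by the additivity formula $c_2(x+y)=c_2(x)+c_1(x)c_1(y)+c_2(y)$ together with the explicit formulas for $c_1(ab)$ and $c_2(ab)$ when $\rank(a)=0$, which reduce everything to $c_2$ pulled back from $X$ and products $c_1(a)c_1(b)$. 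This replaces your unproven filtration identity by a concrete Chern-class computation, and uses the hypotheses exactly where needed: (2) to identify the kernel in $K_0$, and (1) to ensure that kernel is $I_0(X)\otimes K_0(Y)$ on the nose.
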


\begin{proof}
It follows from the assumptions that the map $K_0(Y)\llg K_0(Y_{F(X)})$ is injective and the kernel of the natural homomorphism
$K_0(X\times Y)\llg K_0(Y_{F(X)})$ coincides with
\[
I_0(X)\tens K_0(Y),
\]
where $I_0(X)$ is  the kernel of the rank homomorphism $K_0(X)\llg \Z$.

The kernel of the second homomorphism in the sequence (\ref{seq2}) is generated by the classes of closed integral subschemes
$Z\subset X\times Y$ that are not dominant over $X$. By Riemann-Roch (see \cite{Grothendieck58}), we have $[Z]=-c_2\([O_Z]\)$ in
$\CH^2(X\times Y)$, where $c_i:K_0(X\times Y)\llg\CH^i(X\times Y)$ is the $i$-th Chern class map. As
\[
[O_Z]\in \Ker(K_0(X\times Y)\llg K_0(Y_{F(X)})\)=I_0(X)\tens K_0(Y),
\]
it suffices to to show that
$c_2\(I_0(X)\tens K_0(Y)\)$ is contained in the image $M$ of the first map in the sequence (\ref{seq2}).

The formula $c_2(x+y)=c_2(x)+c_1(x)c_1(y)+c_2(y)$ shows that it suffices to prove that for all $a,a'\in I_0(X)$ and $b,b'\in K_0(Y)$,
the elements $c_1(ab)\cdot c_1\(a'b')$ and $c_2(ab)$ are contained in $M$.
This follows from the formulas (see \cite[Remark 3.2.3 and Example 14.5.2]{Fulton84}): $c_1(ab)=mc_1(a)+nc_1(b)$ and
\[
c_2(ab)=\frac{m^2-m}{2}c_1(a)^2+mc_2(a)+(nm-1)c_1(a)c_1(b)+\frac{n^2-n}{2}c_1(b)^2+nc_2(b),
\]
where $n=\operatorname{rank}(a)$ and $m=\operatorname{rank}(b)$.
\end{proof}

\begin{example}\label{ex2}
If $Y$ is a projective homogeneous variety, then the condition $(1)$ holds by \cite{Panin94}. 
If $X$ is a projective homogeneous variety of a semisimple algebraic group $G$ over $F$ and the Tits algebras of $G$ are split, then it follows from \cite{Panin94} that
the condition $(2)$ also holds for any $Y$. For example, if the even Clifford algebra of a nondegenerate quadratic form is split, then the corresponding
projective quadric $X$ satisfies $(2)$ for any $Y$.
\end{example}

For any field extension $K/F$, let $K^s$ denote the subfield of elements that are algebraic and separable over $F$.

\begin{proposition}\label{prop3}
Suppose that for every field extension $K/F$ we have:
\begin{enumerate}
  \item The natural map $\CH^1(X)\llg \CH^1(X_K)$ is an isomorphism,
  \item The natural map $\CH^1(Y_{K^s})\to \CH^1(Y_K)$ is an isomorphism.
\end{enumerate}

\noindent Then the sequence (\ref{seq2}) is exact.
\end{proposition}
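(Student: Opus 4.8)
The plan is to compute $\CH^2(X\times Y)$ by means of the coniveau spectral sequence
\[
E_1^{p,q}=\coprod_{x\in X^{(p)}}A^q(Y_{F(x)},K_{2-p})\Longrightarrow A^{p+q}(X\times Y,K_2)
\]
attached to the projection $X\times Y\to X$ (the tool used in the proof of Proposition~\ref{prop1}). Since $Y$ is complete and geometrically irreducible we have $E_1^{1,0}=\coprod_{x\in X^{(1)}}F(x)^\times$ and $E_1^{2,0}=\coprod_{x\in X^{(2)}}\Z$, so $E_2^{2,0}=\CH^2(X)$; and since $A^1(Y_{F(x)},K_0)=0$ we have $E_1^{2,1}=0$, hence $E_2^{1,1}=\operatorname{coker}\big(A^1(Y_{F(X)},K_2)\to\coprod_{x\in X^{(1)}}\CH^1(Y_{F(x)})\big)$. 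The edge map $\CH^2(X\times Y)=A^2(X\times Y,K_2)\to E_\infty^{0,2}\hookrightarrow E_1^{0,2}=\CH^2(Y_{F(X)})$ is restriction to the generic fibre, i.e. the second arrow of (\ref{seq2}); hence its kernel is the first step $F^1$ of the coniveau filtration, which fits in an extension $0\to E_\infty^{2,0}\to F^1\to E_\infty^{1,1}\to 0$.

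I then reduce the statement to a fact about $E_\infty^{1,1}$ by two observations. First, a codimension $2$ cycle on $X\times Y$ supported over a closed subset of $X$ of codimension $\geq 2$ is the flat pull-back of a cycle on $X$ (here one uses that $Y$ is irreducible and geometrically reduced); hence $F^2=\operatorname{pr}_X^*\CH^2(X)$, and as $F^3=0$ this group equals $E_\infty^{2,0}$. Second, for a prime divisor $D\subset X$ and $\beta\in\CH^1(Y)$ the product $\operatorname{pr}_X^*[D]\cdot\operatorname{pr}_Y^*\beta$ is supported over $D$, so it lies in $F^1$, and its image in $E_1^{1,1}=\coprod_x\CH^1(Y_{F(x)})$ is $\beta_{F(D)}$ placed in the slot $x=\eta_D$. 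Thus, once it is known that $\CH^1(X)\otimes\CH^1(Y)\to\CH^2(X\times Y)$ surjects onto $E_\infty^{1,1}=F^1/F^2$, we get $\operatorname{im}\big(\CH^2(X)\oplus(\CH^1(X)\otimes\CH^1(Y))\to\CH^2(X\times Y)\big)=F^2+\langle\text{products}\rangle=F^1=\ker\big(\CH^2(X\times Y)\to\CH^2(Y_{F(X)})\big)$, which is the asserted exactness of (\ref{seq2}).

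It remains to show: for every $x\in X^{(1)}$ with $D=\overline{\{x\}}$ and every $\gamma\in\CH^1(Y_{F(x)})$, the class in $E_2^{1,1}$ of ``$\gamma$ in the slot $\eta_D$'' lies in $E_\infty^{1,1}$ and is the image of a product (such elements generate $E_1^{1,1}$, so this yields the required surjectivity). When $D$ is geometrically irreducible this is immediate: hypothesis (2) with $K=F(D)$ gives $\CH^1(Y)\iso\CH^1(Y_{F(D)})$, so $\gamma=\beta_{F(D)}$ with $\beta\in\CH^1(Y)$, and the second observation identifies the class of ``$\gamma$ at $\eta_D$'' with the image of $\operatorname{pr}_X^*[D]\cdot\operatorname{pr}_Y^*\beta$. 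In general, put $L=F(x)^s$, a finite separable extension of $F$, with base change $\rho_X\colon X_L\to X$. Hypothesis (2) for $K=F(x)$ gives $\gamma=\gamma'_{F(x)}$ with $\gamma'=\sum_j n_j[B_j]\in\CH^1(Y_L)$, and hypothesis (1) for $K=L$ gives $\rho_X^*\colon\CH^1(X)\iso\CH^1(X_L)$. Let $\widetilde D\subset X_L$ be the component of $\rho_X^{-1}(D)$ with $F(\widetilde D)=F(D)$; it is geometrically irreducible over $L$ (because $L$ is separably closed in $F(D)$), so the geometrically-irreducible case, applied over $L$ to $\widetilde D$, shows that the class of ``$\gamma'$ at $\eta_{\widetilde D}$'' in $E_2^{1,1}(X_L/L)$ is the image of the product $P_L=\sum_j n_j[\widetilde D\times_L B_j]\in F^1\CH^2((X\times Y)_L)$. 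Writing $[\widetilde D]=\rho_X^*\alpha$ with $\alpha\in\CH^1(X)$ and pushing forward along the finite flat morphism $\rho'=\rho_X\times\id\colon(X\times Y)_L\to X\times Y$, the projection and base-change formulas give $\rho'_*(P_L)=\operatorname{pr}_X^*\alpha\cdot\operatorname{pr}_Y^*(\pi_*\gamma')$ with $\pi\colon Y_L\to Y$ --- a product over $F$. Since the coniveau spectral sequence is covariant for $\rho'$ over $\rho_X$ compatibly with the coniveau filtrations, and on $E_1^{1,1}$ this map sends ``$\gamma'$ at $\eta_{\widetilde D}$'' to ``$\gamma$ at $\eta_D$'' (corestriction along $F(\widetilde D)=F(D)$ being the identity), a diagram chase identifies the class of ``$\gamma$ at $\eta_D$'' in $E_2^{1,1}(X/F)$ with the image of $\rho'_*(P_L)$, which finishes the proof.

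The main obstacle is precisely this last reduction. The geometrically-irreducible case is essentially formal given the two observations; the real work is a prime divisor $D$ whose field of constants $L=F(x)^s$ is a nontrivial extension of $F$. Hypothesis (1) is exactly what makes $L$ invisible at the level of $\CH^1$ --- it forces $[\widetilde D]$ to descend to $\CH^1(X)$ --- and this is what converts a product over $L$ into a product over $F$ after corestriction; the genuine effort lies in keeping track of the corestriction maps and the filtrations across the base change in the coniveau spectral sequence.
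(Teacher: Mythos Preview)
Your proof is correct and follows essentially the same strategy as the paper: both run the spectral sequence for the projection $X\times Y\to X$, identify the relevant graded piece with $E_2^{1,1}$, and for each $x\in X^{(1)}$ pass to $L=F(x)^s$, use hypothesis~(2) to descend the class in $\CH^1(Y_{F(x)})$ to $\CH^1(Y_L)$, and then use hypothesis~(1) together with the projection formula for the norm/pushforward along $L/F$ to exhibit the resulting contribution as a product over $F$. The paper packages this via the cokernel $C(K)$ and the norm $N_{K/F}\colon C(K)\to C(F)$, whereas you spell out the coniveau filtration and the explicit cycle $P_L$, but the argument is the same.
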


\begin{proof}
Consider the spectral sequence
\begin{equation}\label{ss2}
E_1^{p,q}(F)=\coprod_{x\in X^{(p)}} A^q(Y_{F(x)},K_{2-p})\Longrightarrow A^{p+q}(X\times Y,K_2)
\end{equation}
for the projection $X\times Y\llg X$. The nonzero terms of the first page are the following:

\[
\xymatrix{
\CH^2(Y_{F(X)})  \\
A^1(Y_{F(X)},K_{2}) \ar[r] & \coprod_{x\in X^{(1)}} \CH^1(Y_{F(x)})\\
A^0(Y_{F(X)},K_{2}) \ar[r]& \coprod_{x\in X^{(1)}} A^0(Y_{F(x)},K_{1}) \ar[r]& \coprod_{x\in X^{(2)}} \CH^0(Y_{F(x)}).
}
\]

\medskip

As in the proof of Proposition \ref{prop1}, we have $E_2^{2,0}(F)=\CH^2(X)$. For a field extension $K/F$, write $C(K)$ for the factor group
\[
\Ker\(\CH^2(X_K\times Y_K)\llg \CH^2(Y_{K(X)})\)/\Im\(\CH^2(X_K)\llg \CH^2(X_K\times Y_K)\).
\]
The spectral sequence (\ref{ss2}) for the varieties $X_K$ and $Y_K$ over $K$ yields an isomorphism $C(K)\simeq E_2^{1,1}(K)$.
We have a natural composition
\[
\CH^1(X_K)\tens\CH^1(Y_K)\llg E_1^{1,1}(K)\llg E_2^{1,1}(K)\simeq C(K).
\]

We claim that the group $C(F)$ is generated by images of the compositions 
\[
\CH^1(X_K)\tens\CH^1(Y_K)\llg  C(K)\xra{N_{K/F}} C(F)
\]
over all finite separable field extensions $K/F$ (here $N_{K/F}$ is the norm map for the extension $K/F$).

The group $C(F)$ is generated by images of the maps
\[
\varphi_x:\CH^1(Y_{F(x)})\llg E_2^{1,1}(F)\simeq C(F)
\]
over all points $x\in X^{(1)}$.
Pick such a point $x$ and let $K:=F(x)^s$ be the subfield of elements that are separable over $F$. Then $K/F$ is a finite separable field extension. Let $x'\in X_K^{(1)}$
be a point over $x$ such that $K(x')\simeq F(x)$. Then $\varphi_x$ coincides with the composition
\[
\CH^1(Y_{K(x')})\llg  C(K)\xra{N_{K/F}} C(F).
\]
By assumption, the map $\CH^1(Y_{K})\llg \CH^1(Y_{K(x')})$ is an isomorphism, hence the image of $\varphi_x$ coincides with the image of
\[
[x']\tens\CH^1(Y_K)\llg  C(K)\xra{N_{K/F}} C(F),
\]
whence the claim.

As $\CH^1(X)\llg \CH^1(X_K)$ is an isomorphism for every field extension $K/F$, the projection formula shows  that the map
$\CH^1(X)\tens\CH^1(Y)\llg   C(F)$ is surjective. The statement follows.
\end{proof}

\begin{example}\label{ex3}
Let $Y$ be a projective homogeneous variety with the $F$-algebras $E$ and $A$ as in Example \ref{ex1}. If $A$ is split, then $\CH^1(Y_K)=K_0(E\tens K)$
for every field extension $K/F$. As $K^s$ is separably closed in $K$, the natural map $K_0(E\tens K^s)\llg K_0(E\tens K)$ is an isomorphism, therefore, the
condition $(2)$ holds.
\end{example}

Write $\widetilde{\CH}^2(X\times Y)$ for the cokernel of the product map $\CH^1(X)\tens \CH^1(Y)\llg \CH^2(X\times Y)$. We have  the following commutative diagram:

\[
\xymatrix{
& & \CH^2(X) \ar[d] \ar[rd] \\
0 \ar[r] &  \CH^2(Y) \ar[r]\ar[rd] & \widetilde{\CH}^2(X\times Y)\ar[r]\ar[d] & \CH^2(X_{F(Y)})\\
& & \CH^2(Y_{F(X)})
}
\]

\medskip

Proposition \ref{prop1} gives conditions for the exactness of the row in the diagram and Propositions \ref{prop2} and \ref{prop3} - for the exactness of the column in the diagram.

A diagram chase yields together with Propositions \ref{prop1}, \ref{prop2} and \ref{prop3} yields the following statements.

\begin{theorem}\label{thmA.1}
Let $X$ and $Y$ be smooth complete geometrically irreducible varieties such that for every field extension $K/F$:
\begin{enumerate}
  \item The natural map $\CH^1(X)\llg \CH^1(X_K)$ is an isomorphism of torsion free groups,
   \item The natural map $\CH^2(X)\llg \CH^2(X_K)$ is injective,
  \item The product map $\CH^1(X_K)\tens K^\times\llg A^1(X_K,K_2)$
is an isomorphism,
  \item The Grothendieck group $K_0(Y)$ is torsion-free,
  \item The product map $K_0(X)\tens K_0(Y)\llg K_0(X\times Y)$ is an isomorphism.
 \end{enumerate}

\noindent Then the natural map $\CH^2(Y)\llg \CH^2(Y_{F(X)})$ is injective.
\end{theorem}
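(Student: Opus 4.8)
The plan is to deduce Theorem \ref{thmA.1} by a diagram chase in the commutative diagram displayed just before the statement, feeding into it the exactness results of Propositions \ref{prop1}, \ref{prop2} and \ref{prop3}. First I would check that the five hypotheses of Theorem \ref{thmA.1} are exactly what is needed to apply those three propositions: conditions (1) and (3) are precisely hypotheses (1) and (2) of Proposition \ref{prop1}, so the \emph{row} $0\to\CH^2(Y)\to\widetilde{\CH}^2(X\times Y)\to \CH^2(X_{F(Y)})$ is exact; conditions (4) and (5) are precisely hypotheses (1) and (2) of Proposition \ref{prop2}, so the sequence (\ref{seq2}) is exact, which after quotienting by the image of $\CH^1(X)\tens\CH^1(Y)$ says that the \emph{column} $\CH^2(X)\to\widetilde{\CH}^2(X\times Y)\to\CH^2(Y_{F(X)})$ is exact at the middle term; and conditions (1) and (2) together with Proposition \ref{prop3} give exactness of that same column again (Proposition \ref{prop3} is an alternative sufficient condition for the exactness of the column, so invoking Proposition \ref{prop2} suffices here, using (4) and (5)). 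One point to verify carefully is that $\widetilde{\CH}^2(X\times Y)$ as defined (cokernel of the product map) is the correct object in which both the row and the column live — the row of Proposition \ref{prop1} is stated for $\CH^2(X\times Y)/(\CH^1(X)\tens\CH^1(Y))\oplus\CH^2(Y)$, which matches after identifying $\CH^2(Y)$ with its image, and the column is obtained from (\ref{seq2}) by the same quotient.

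Next I would run the actual diagram chase. Take $\alpha\in\CH^2(Y)$ mapping to $0$ in $\CH^2(Y_{F(X)})$; I want $\alpha=0$. Push $\alpha$ into $\widetilde{\CH}^2(X\times Y)$ along the exact row — call the image $\beta$; it is nonzero iff $\alpha$ is, by injectivity of that map. The composite $\CH^2(Y)\to\widetilde{\CH}^2(X\times Y)\to\CH^2(Y_{F(X)})$ is the diagonal arrow in the diagram (scalar extension to $F(X)$ is a ring map, compatible with all the natural maps), so $\beta$ maps to $\alpha_{F(X)}=0$ in $\CH^2(Y_{F(X)})$. By exactness of the column at $\widetilde{\CH}^2(X\times Y)$, $\beta$ comes from some $\gamma\in\CH^2(X)$. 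Now I use the other part of the row's exactness: the image of $\CH^2(X)\to\widetilde{\CH}^2(X\times Y)$ is a complement-type piece — more precisely, I would restrict the whole picture to $F(Y)$: the element $\gamma_{F(Y)}\in\CH^2(X_{F(Y)})$ equals the image of $\beta$ under $\widetilde{\CH}^2(X\times Y)\to\CH^2(X_{F(Y)})$, and that image is also the image of $\alpha$ under $\CH^2(Y)\to\widetilde{\CH}^2(X\times Y)\to\CH^2(X_{F(Y)})$, which is $0$ since that composite factors through $\CH^2(Y)\to\CH^2(X_{F(Y)})$ and the relevant edge map kills $\CH^2(Y)$ (this is the content of $E_\infty^{2,0}=\CH^2(Y)$ in Proposition \ref{prop1}'s proof, i.e. $\CH^2(Y)$ injects into $\widetilde{\CH}^2$ with image in the kernel of the top map). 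Hence $\gamma_{F(Y)}=0$ in $\CH^2(X_{F(Y)})$; but by hypothesis (2), $\CH^2(X)\to\CH^2(X_{F(Y)})$ is injective, so $\gamma=0$, hence $\beta=0$, hence $\alpha=0$.

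The step I expect to be the main obstacle is bookkeeping the commutativity: one must be sure that the two "restriction" arrows $\widetilde{\CH}^2(X\times Y)\to\CH^2(X_{F(Y)})$ and $\widetilde{\CH}^2(X\times Y)\to\CH^2(Y_{F(X)})$ appearing in the diagram are genuinely compatible with the maps out of $\CH^2(X)$ and $\CH^2(Y)$ in the expected way, and in particular that the composite $\CH^2(Y)\to\widetilde{\CH}^2(X\times Y)\to\CH^2(X_{F(Y)})$ vanishes — this is where the spectral-sequence structure from Proposition \ref{prop1} (specifically that $\CH^2(Y)=E_2^{2,0}=E_\infty^{2,0}$ sits in the kernel of the edge map to $\CH^2(X_{F(Y)})$) is really being used, rather than just the bare exact sequence. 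Once that compatibility is pinned down, the chase above closes. I would also note explicitly, to justify using Proposition \ref{prop2} over Proposition \ref{prop3}, that hypotheses (4) and (5) alone already deliver exactness of the column, so conditions (1) and (2) are then only needed for the final injectivity input $\CH^2(X)\hookrightarrow\CH^2(X_{F(Y)})$ and (via (1)) for the row.
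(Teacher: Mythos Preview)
Your proposal is correct and follows exactly the approach the paper intends: the paper's proof is literally just the sentence ``A diagram chase yields together with Propositions \ref{prop1}, \ref{prop2} and \ref{prop3} the following statements,'' and you have spelled out that chase accurately, correctly matching hypotheses (1), (3) of the theorem to Proposition \ref{prop1} for the row, hypotheses (4), (5) to Proposition \ref{prop2} for the column, and using hypothesis (2) for the final injectivity $\CH^2(X)\hookrightarrow\CH^2(X_{F(Y)})$. One small simplification: your worry about the vanishing of the composite $\CH^2(Y)\to\widetilde{\CH}^2(X\times Y)\to\CH^2(X_{F(Y)})$ requiring the spectral-sequence filtration is unfounded --- that vanishing is precisely the exactness of the row at $\widetilde{\CH}^2(X\times Y)$, already contained in the bare exact sequence of Proposition \ref{prop1}.
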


\begin{remark}\label{rem1}
The conditions $(1)-(3)$ hold for  a smooth projective quadric $X$ of dimension at least $7$ by \cite[Theorem 6.1]{Karpenko90} and Example \ref{ex1}.
By Example \ref{ex2}, the conditions $(4)$ and $(5)$ hold if the even Clifford algebra of $X$ is split and $Y$ is a projective homogeneous variety.
\end{remark}

\begin{theorem}\label{thmA.2}
Let $X$ and $Y$ be smooth complete geometrically irreducible varieties such that for every field extension $K/F$:
\begin{enumerate}
  \item The natural map $\CH^1(X)\llg \CH^1(X_K)$ is an isomorphism of torsion free groups,
   \item The natural map $\CH^2(X)\llg \CH^2(X_K)$ is injective,
  \item The product map $\CH^1(X_K)\tens K^\times\llg A^1(X_K,K_2)$
is an isomorphism,
  \item The natural homomorphism $\CH^1(Y_{K^s})\to \CH^1(Y_K)$ is an isomorphism.
 \end{enumerate}

\noindent Then the natural map $\CH^2(Y)\llg \CH^2(Y_{F(X)})$ is injective.
\end{theorem}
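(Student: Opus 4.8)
The plan is to deduce the theorem from Propositions \ref{prop1} and \ref{prop3} by a diagram chase in the commutative diagram displayed just above. First I would note that conditions (1) and (3) of the theorem are exactly hypotheses (1) and (2) of Proposition \ref{prop1}, so that proposition applies to the pair $(X,Y)$; dividing $\CH^2(X\times Y)$ by the image of the product map $\CH^1(X)\tens\CH^1(Y)\to\CH^2(X\times Y)$ (which restricts to $0$ on the generic fibre over $Y$, since $\CH^1$ of a point vanishes), we obtain exactness of the row
\[
0\llg \CH^2(Y)\llg \widetilde{\CH}^2(X\times Y)\llg \CH^2(X_{F(Y)}).
\]
Likewise, conditions (1) and (4) are exactly hypotheses (1) and (2) of Proposition \ref{prop3}, so the complex (\ref{seq2}) is exact, and passing to the same quotient yields exactness of the column
\[
\CH^2(X)\llg \widetilde{\CH}^2(X\times Y)\llg \CH^2(Y_{F(X)}).
\]

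Next I would run the chase. Let $\alpha\in\CH^2(Y)$ restrict to $0$ in $\CH^2(Y_{F(X)})$, and write $\beta\in\widetilde{\CH}^2(X\times Y)$ for its image under the injective map $\CH^2(Y)\to\widetilde{\CH}^2(X\times Y)$ coming from the row. Commutativity of the lower triangle shows that $\beta$ maps to $0$ in $\CH^2(Y_{F(X)})$, so by exactness of the column $\beta$ is the image of some $\gamma\in\CH^2(X)$. Now map everything into $\CH^2(X_{F(Y)})$: by commutativity of the upper triangle the image of $\gamma$ there equals the image of $\beta$ under $\widetilde{\CH}^2(X\times Y)\to\CH^2(X_{F(Y)})$, and since $\beta$ lies in the image of $\CH^2(Y)$, exactness of the row forces this to be $0$. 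The composite $\CH^2(X)\to\CH^2(X\times Y)\to\CH^2(X_{F(Y)})$ is nothing but the restriction map, because $X_{F(Y)}\to X\times Y\to X$ is the base change of $X$ along $F\hookrightarrow F(Y)$; and this restriction map is injective by hypothesis (2) applied with $K=F(Y)$. Hence $\gamma=0$, so $\beta=0$, and injectivity of $\CH^2(Y)\to\widetilde{\CH}^2(X\times Y)$ gives $\alpha=0$.

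The argument is entirely formal once Propositions \ref{prop1} and \ref{prop3} are available; the only point requiring attention is the identification of the composite $\CH^2(X)\to\CH^2(X\times Y)\to\CH^2(X_{F(Y)})$ with the restriction map, so that hypothesis (2) may legitimately be invoked. Thus I expect no genuine obstacle in this deduction itself: the real content — the ``hard part'' — lies in the exactness statements of Propositions \ref{prop1} and \ref{prop3} (the spectral sequence computations and the Riemann--Roch input) that feed the diagram.
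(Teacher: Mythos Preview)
Your proof is correct and follows exactly the approach the paper indicates: the paper simply says the theorem follows from Propositions~\ref{prop1} and~\ref{prop3} by a diagram chase, and you have carried out precisely that chase. One tiny inaccuracy in your closing remark: the Riemann--Roch input belongs to Proposition~\ref{prop2} (used for Theorem~\ref{thmA.1}), not Proposition~\ref{prop3}; the latter relies only on a spectral-sequence and transfer argument.
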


\begin{remark}
The conditions $(1)-(3)$ hold for  a smooth projective quadric $X$ of dimension at least $7$ and a projective homogeneous variety $Y$ with the split
Azumaya algebra by Remark \ref{rem1} and Example \ref{ex3}.
\end{remark}

\end{document}